\documentclass{amsart}
\usepackage{enumerate}
\usepackage{mathdots}

\def\Ad{\mathrm{Ad}}

\def\CC{\mathbb{C}}
\def\RR{\mathbb{R}}
\def\Ha{\mathbb{H}}
\def\dist{\mbox{-}\mathrm{dist}}
\def\cD{\mathcal{D}}

\def\S{\mathrm{S}}
\def\SD{\mathcal{SD}}
\def\Sc{\mathcal{S}}
\def\nn{\overline{n}}

\def\inv{\mathrm{inv}}

\def\bG{\boldsymbol{\G}}
\def\DD{\Delta}
\def\SF{\mathcal{SAF}}
\def\bH{\boldsymbol{\H}}
\def\tK{\widetilde{\K}}

\usepackage{amsmath,amssymb,mathrsfs}
\usepackage{xcolor}

\textwidth=32pc
\addtolength{\oddsidemargin}{-1pc}
\addtolength{\evensidemargin}{-1pc}


\newcounter{makeconstant}
\newenvironment{makeconstant}%
{\refstepcounter{makeconstant}}%
{}


\theoremstyle{plain}
\newtheorem{theorem}[equation]{Theorem}
\newtheorem{lemma}[equation]{Lemma}
\newtheorem{corollary}[equation]{Corollary}
\newtheorem{proposition}[equation]{Proposition}

\theoremstyle{definition}
\newtheorem{example}[equation]{Example}
\newtheorem{definition}[equation]{Definition}

\newtheorem{remark}[equation]{Remark}

\newtheorem*{remark*}{Remark}

\numberwithin{equation}{section} 

\def\A{{\rm A}}

\def\C{{\rm C}}
\def\D{{\rm D}}
\def\E{{\rm E}}
\def\F{{\rm F}}
\def\G{{\rm G}}
\def\H{{\rm H}}
\def\I{{\rm I}}
\def\J{{\rm J}}
\def\K{{\rm K}}
\def\L{{\rm L}}
\def\M{{\rm M}}
\def\N{{\rm N}}

\def\P{{\rm P}}
\def\Q{{\rm Q}}
\def\R{{\rm R}}

\def\U{{\rm U}}







\def\d{\delta}
\def\e{\epsilon}



\def\({\left(}
\def\){\right)}
\def\>{\geqslant}
\def\<{\leqslant}



\def\Hom{\operatorname{Hom}}

\def\GL{\operatorname{GL}}
\def\Gal{\operatorname{Gal}}

\def\Res{\operatorname{Res}}

\def\Irr{\operatorname{Irr}}

\def\dim{\operatorname{dim}}

\def\diag{\operatorname{diag}}


\def\tG{{\widetilde{\G}}}
\def\tH{{\widetilde{\H}}}

\def\presuper#1#2%
  {\mathop{}%
   \mathopen{\vphantom{#2}}^{#1}%
   \kern-\scriptspace%
   #2}

\title{Multiplicity one for pairs of Prasad--Takloo-Bighash type}
\author{Paul Broussous}
\address{Universit\'e de Poitiers, Laboratoire de Math\'ematiques et Applications,
T\'el\'eport 2 - BP 30179, Boulevard Marie et Pierre Curie, 86962, Futuroscope Chasseneuil Cedex. France.}
\email{Paul.Broussous@math.univ-poitiers.fr}
\author{Nadir Matringe}
\address{Universit\'e de Poitiers, Laboratoire de Math\'ematiques et Applications,
T\'el\'eport 2 - BP 30179, Boulevard Marie et Pierre Curie, 86962, Futuroscope Chasseneuil Cedex. France.}
\email{nadir.matringe@math.univ-poitiers.fr}
\subjclass[2010]{22E50; 11F70}

\begin{document}
\maketitle

\begin{abstract}
Let $\E/\F$ be a quadratic extension of non-archimedean local fields of characteristic different from $2$. 
Let $\A$ be an $\F$-central simple algebra of even dimension so that it contains $\E$ as a subfield, 
set $\G=\A^\times$ and $\H$ for the centralizer of $\E^\times$ in $\G$. Using a Galois descent argument, we prove that all double cosets $\H g \H\subset \G$ are stable under the anti-involution 
$g\mapsto g^{-1}$, reducing to Guo's result for $\F$-split $\G$ (\cite{Guo97}) which we extend to fields of positive characteristic different from $2$. 
We then show, combining global and local results, that $\H$-distinguished irreducible representations of $\G$ are self-dual and this implies that $(\G,\H)$ is a Gelfand pair: \[\dim_{\mathbb{C}}(\Hom_{\H}(\pi,\mathbb{C}))\leq 1\] 
for all smooth irreducible representations $\pi$ of $\G$. Finally we explain how to obtain the the multiplicity one statement in the archimedean case using the criteria of Aizenbud and Gourevitch (\cite{AG09}), and we then show self-duality of irreducible distinguished representations in the archimedean case too.
\end{abstract}

\section{Introduction}\label{section intro}

Let $\E/\F$ be a quadratic extension of non-archimedean local fields of characteristic not $2$. Let us set  
$\G=\A^\times$ for $\A$ a central simple $\F$-algebra of even dimension $n=2m$ and $\H$ the centralizer in $\G$ of 
$\E$ embedded in $\A$ as an $\F$-sub-algebra (all such embeddings are $\A^\times$-conjugate). If $\pi$ is a smooth irreducible representation
of $\G$ (especially a discrete series representation and more generally a representation of $\G$ with generic transfer to its $\F$-split form), there are fine conjectures 
of Prasad and Takloo-Bighash (\cite{PTB11}, Conjecture 1) which predict in terms of its Langlands parameter when $\pi$ 
should be $\H$-distinguished: when is the space $\Hom_{\H}(\pi,\chi)$ not reduced to zero when $\chi$ is a character of $\H$?
These conjectures are inspired by earlier works of Tunnell and Saito (\cite{T83} and \cite{S93}) on $\GL(2,\F)$, and there 
has been recent progress made towards a positive answer to them in several cases (\cite{FMW18}, \cite{C18} and \cite{X18}). We will say that the pair $(\G,\H)$ described above is of Prasad--Takloo-Bighash type, or PTB type in short.

One basic question which has still not been addressed in general for such pairs is multiplicity one: does one have \[\dim(\Hom_{\H}(\pi,\chi))\leq 1\] for all irreducible representations $\pi$ of $\G$? When $\chi=\mathbf{1}$, $\F$ is of characteristic zero and $\H\simeq \GL(m,\E)$, the answer is yes thanks to Guo's work \cite{Guo97}. In this paper, after extending Guo's result to non-archimedean local fields of characteristic different from $2$, we deduce that $\dim(\Hom_{\H}(\pi,\mathbf{1}))\leq 1$ for any pair of PTB type $(\G,\H)$ whenever $\pi$ is an irreducible representation of $\G$.

The main result of \cite{Guo97} is that the double cosets $\H g\H\subset \G$ are fixed by the anti-involution $i:g\mapsto g^{-1}$ of $\G$ from which he deduces by the Gelfand-Kazhdan method (\cite{GK75}, or more accuratly \cite{BZ76}) that 
\begin{equation}\label{equation GP2} \dim_{\mathbb{C}}(\Hom_{\H}(\pi,\mathbb{C}))\dim_{\mathbb{C}}(\Hom_{\H}(\pi^\vee,\mathbb{C}))\leq 1\end{equation} for 
any smooth irreducible representation $\pi$ of $\G$ with contragredient $\pi^\vee$. This is enough for him to get multiplicity one in the cases he considers because the group $\H$ is naturally stabilized by an anti-involution $\tau$ of $\G$ such that 
$\pi^\vee\simeq \pi\circ \tau$.  In fact the existence of $\tau$ and the Gelfand-Kazhdan arguments even imply that when $\pi$ is $\H$-distinguished, it is self-dual: $\pi\simeq \pi^\vee$. Note that Guo's work is inspired by the work \cite{JR96} of Jacquet and Rallis, where the authors prove multpilicity one and self-duality of irreducible distinguished representations 
for the pair $(\GL_n(\F),\GL_m(\F)\times \GL_m(\F))$.

Here we deduce from Guo's paper by a Galois descent argument that the double cosets $\H g\H\subset \G$ are always fixed by $i$, and inequality (\ref{equation GP2}) 
follows (Sections \ref{section double cosets}). But then we do not have the anti-involution $\tau$ at our disposal anymore. In Section \ref{section self-duality and multiplicity one}, we first 
deduce that an $\H$-distinguished representation $\pi$ is self-dual when it is cuspidal by globalizing it as a local component of a distinguished cuspidal automorphic representation (\cite{PSP08} and \cite{GL18}), 
strong multiplicity one (\cite{BaduGJL08} and \cite{BR17}) and the results of \cite{Guo97} and 
\cite{JR96}. We then extend this self-duality result to the case of distinguished standard modules, hence to that of irreducible representations, by standard Mackey theory arguments. 
The multiplicity one result follows (Theorem \ref{theorem self duality of dist irreps} and Corollary \ref{corollary PTB is G}).

Finally in Section \ref{section archimedean case}, we explain how the multiplicity one statement in the archimedean case follows 
from the criteria given in \cite{AG09}, and we adapt the argument of Jacquet-Rallis to the archimedean setting to show self-duality of irreducible distinguished representations.

We conclude this introduction by noticing that together with the results of \cite{JR96} and \cite{AG09} in the archimedean case, the results above imply that over global fields of characteristic different from $2$, the global periods of cuspidal automorphic representations for pairs of PTB type are products of local periods.

{\acknowledgements{We thank A. Bouaziz for useful conversations, and D. Gourevitch for useful explanations concerning \cite{AG09}. We thank the referees for their useful comments, leading to clarifications of some parts of the paper.}}

\section{Notation and preliminaries}\label{section notation}

If $\K$ is a group, $X\subset \K$ and $\K'$ is a subgroup of $\K$, we denote by $\K'_X$ the centralizer of $X$ in $\K'$. If 
$X=\{k\}$ we shall write $\K'_k$  for $\K'_{\{k\}}$. 

If $\R$ is a ring, $Y\subset \R$ and $\R'$ is a subring of $\R$, we denote by $\R'_Y$ the centralizer of $Y$ in $\R'$. If 
$Y=\{r\}$ we shall write $\R'_r$ for $\R'_{\{r\}}$. 

If $\Gamma$ is a group acting on a set $Z$, we denote by $Z^\Gamma$ the set of elements in $Z$ fixed by $\Gamma$.

If $l$ is field with separable closure $l^s$ and $l'$ is a Galois extension of $l$ inside $l^s$, we denote by 
$\Gal(l'/l)$ its Galois group. If $\bG$ is an algebraic group defined over $l$,  we denote by $\H^1(\Gal(l'/l),\bG)$ the first cohomology set 
of $\Gal(l'/l)$ with values in $\bG$. When $l'=l^s$ we shall set $\H^1(l,\bG):= \H^1(\Gal(l^s/l),\bG)$. We will make essential use of the following
well-known property of certain cohomology sets of this type, proved for example in \cite[Section 1.7, Example 1]{Kneser69}:

\begin{lemma}\label{lemma trivial cohomology}
Let $k$ be a field, $\A$ be a be a finite dimensional $k$-algebra, then for any finite Galois extension $k'$ of $k$, one has 
\[\H^1(\Gal(k'/k), (\A\otimes_k k')^\times)=\{1\}.\]
\end{lemma}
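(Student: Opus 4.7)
The plan is to prove this as a generalisation of Hilbert's Theorem~90, via Galois descent of a twisted semi-linear action. Set $R := \A \otimes_k k'$ and $G := \Gal(k'/k)$, with $G$ acting on $R$ through the second factor. Given a 1-cocycle $c : G \to R^\times$, I define a twisted semi-linear action
\[
\sigma \star v := c_\sigma \cdot \sigma(v), \qquad v \in R,\ \sigma \in G;
\]
the cocycle relation $c_{\sigma\tau} = c_\sigma \sigma(c_\tau)$ is exactly what makes $\star$ a group action. Since $G$ fixes $\A \subset R$ pointwise and $c_\sigma$ acts on $R$ by left multiplication, each operator $v \mapsto \sigma \star v$ commutes with right multiplication by elements of $\A$. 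Hence the fixed-point set $R^\star$ is a right $\A$-submodule of $R$, and classical semi-linear Galois descent for $k'$-vector spaces furnishes a right $R$-module isomorphism $R^\star \otimes_k k' \xrightarrow{\sim} R$, giving in particular $\dim_k R^\star = \dim_k \A$.

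The proof now reduces to producing an $x \in R^\star$ which is a unit of $R$: in that case the relation $\sigma \star x = x$ rewrites as $c_\sigma = x \cdot \sigma(x)^{-1}$, exhibiting $c$ as a coboundary. To produce such an $x$, I fix a $k$-basis $e_1, \ldots, e_n$ of $R^\star$, which by the descent isomorphism is also a $k'$-basis of $R$, and consider the polynomial
\[
P(X_1, \ldots, X_n) \;:=\; \det\bigl(L_{X_1 e_1 + \cdots + X_n e_n}\bigr) \ \in \ k'[X_1, \ldots, X_n],
\]
where $L_r$ denotes left multiplication by $r$ viewed as a $k'$-endomorphism of $R$, so that an element of $R$ lies in $R^\times$ exactly when $P$ does not vanish at its coordinates. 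Expressing $1 \in R^\times$ in the basis $(e_i)$ yields a point at which $P$ equals $1$, so $P$ is not the zero polynomial.

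When $k$ is infinite, $k^n$ is Zariski-dense in $(k')^n$, so $P$ does not vanish identically on $k^n$, and any $k$-rational non-zero of $P$ supplies the required unit inside $R^\star$. The main obstacle is the case of finite $k$, where the polynomial argument collapses (a non-zero polynomial over $k'$ can vanish identically on $k^n$). Here I would appeal to Lang's theorem: the unit group of $\A$, regarded as the affine $k$-group scheme $S \mapsto (\A \otimes_k S)^\times$, is a non-empty Zariski-open subscheme of the affine space underlying $\A$, hence a smooth connected linear algebraic $k$-group. Lang's theorem gives $\H^1(k, \A^\times) = 1$ over a finite field, which specialises to the desired vanishing for every finite Galois extension $k'/k$ and closes the remaining case.
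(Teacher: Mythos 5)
Your argument is correct. Note that the paper does not actually prove this lemma: it simply cites Kneser (Section 1.7, Example 1), so you have supplied a self-contained proof where the paper gives a reference. Your descent set-up (twisting the semilinear Galois action by the cocycle, identifying $R^\star\otimes_k k'\simeq R$ by Speiser's theorem, and reducing to the existence of a unit in $R^\star$) is exactly the standard opening move, and all the individual steps check out: the cocycle identity does make $\star$ an action, $v\in R^\times$ is indeed equivalent to $\det(L_v)\neq 0$ in a finite-dimensional algebra, and a nonzero polynomial in $k'[X_1,\dots,X_n]$ cannot vanish on $k^n$ when $k$ is infinite (expand the coefficients along a $k$-basis of $k'$). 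Where you diverge from the usual treatments is in how you produce the unit: the generic-determinant argument forces a case split at finite fields, which you patch with Lang's theorem (legitimately --- the unit functor is a nonempty open subscheme of affine space, hence a smooth connected group, and inflation is injective on nonabelian $\H^1$). The classical proofs avoid this dichotomy: either one averages, setting $b(c)=\sum_\sigma c_\sigma\,\sigma(c)$ and using linear independence of characters to show some $b(c)$ is invertible (this is uniform in $k$), or one observes that $R^\star$ and $\A$ are right $\A$-modules that become isomorphic after the faithfully flat extension $k\to k'$, hence are isomorphic by Noether--Deuring/Krull--Schmidt, and the image of $1$ under $\A\simeq R^\star\subset R$ is then automatically a unit. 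Your version costs you Lang's theorem as an external input but buys a very concrete, computational treatment of the infinite-field case; either way the lemma is established in the generality the paper needs (it is applied over both characteristic-$0$ and positive-characteristic local fields, and over $\RR$, all infinite, so even your infinite-field case alone would suffice for every actual use in the paper).
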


We denote by $\F$ a local field (non-archimedean or archimedean) of characteristic different from $2$ and by $|.|$ its normalized absolute value. We denote by $\nu$ 
the absolute value of the reduced norm on any central simple $\F$-algebra. We denote by $\E$ a quadratic extension 
of $\F$ and by $|.|_\E$ its normalized absolute value (in particular in the archimedean case $\F=\RR$ and $\E=\CC$). 

\subsection{Central simple algebras over local fields}\label{subsection CSA}

We denote by $\A$ a central simple $\F$-algebra of \textit{even  dimension}. It
is of the form $\mathcal{M}_n(\D)$ for $\D$ a central division $\F$-algebra. We denote by $d$ the square root of the dimension of $\D$ over $\F$ and call it the index of
$\A$; the fact that $\dim_\F(\A)$ is even thus translates as $nd$ is even. Note that if $d$ is odd, then 
$\D\otimes_\F \E$ is a central division $\E$-algebra which we denote by $\D^{\E}$. If $d$ is even, then 
$\E$ embeds in $\D$ and $\D_\E$ is also a central division $\E$-algebra. In any case, whether $d$ is odd or even, the field $\E$ embeds as 
an $\F$-sub-algebra of $\A$ (all such embeddings being conjugate by $\A^\times$ thanks to the Skolem-Noether theorem), and 
$\A_{\E} \underset{\E}{\simeq} \mathcal{M}_n(\D_\E)$ if $d$ is even, whereas 
$\A_{\E} \underset{\E}{\simeq} \mathcal{M}_m(\D^\E)$ if $d$ is odd and $n=2m$.

\subsection{Symmetric pairs}\label{subsection symmetric pairs}

\begin{definition}\label{definition symmetric pair}
Let $\bG$ be a reductive group defined over $\F$ with an $\F$-rational involution $\theta$. We set $\bH=\bG^\theta$. 
We call the triple $(\bG,\bH,\theta)$ an \textit{$\F$-symmetric pair} or just a \textit{symmetric pair} if we do not want to take the non-archimedean local field $\F$ into account. We say that $(\G,\H)$ is of \textit{symmetric type}.
\end{definition}

The following are two notorious examples of symmetric pairs.

\begin{example}\label{example sym pairs}
\noindent \begin{enumerate}
\item\label{example diagonal pair} This is the most simple example. Take $\bH$ an $\F$-reductive group, $\bG=\bH\times \bH$ and $\theta$ the switching involution $(x,y)\mapsto (y,x)$ which fixes the 
diagonal embedding $\Delta(\bH)$ of $\bH$ into $\bG$. Then 
$(\bG,\Delta(\bH),\theta)$ is an $\F$-symmetric that we call a \textit{diagonal pair}.
\item\label{example galois pair} Let $\bH$ be an $\F$-reductive group, $\bG=\Res_{\E/\F}(\bH)$ the Weil restriction of $\bH$ with respect to the extension $\E/\F$, and $\theta$ the 
involution induced by the generator of the galois group $\Gal(\E/\F)$. Then 
$(\bG,\bH,\theta)$ is an $\F$-symmetric pair called a \textit{Galois pair}.
\end{enumerate}
\end{example}

Now we describe the pair of main interest to us. For this purpose we write $\E=\F(\d)$ with $\d^2\in \F$.

\begin{definition}\label{definition PTB pair}
There is (up to $\F$-isomorphism) a unique $\F$-symmetric pair $(\bG,\bH,\theta)$ such that $\G=\A^\times$, $\theta=\mathrm{Ad}(\d)$ (i.e. the conjugation $x\mapsto \d.x.\d^{-1}$), hence $\H=\A_\E^\times$. We call such a pair a \textit{PTB pair} (a Prasad--Takloo-Bighash pair). Moreover we will say that 
the pair $(\G,\H)$ is of \textit{PTB type}. By definition the index $d$ of $(\G,\H)$ will be that of $\A$. When $d=1$, we shall say pairs of \textit{Guo type}.
\end{definition}

\begin{remark}
Note that Guo more generaly considers PTB pairs with $d=1$ or $2$, but for us it will be convenient to exclude the second case of our definition of "Guo pairs", as it will not play any particular role here.
\end{remark}

We shall also come across the following closely related pair.

\begin{definition}\label{definition JR pair}
We call the $\F$-symmetric pair $(\GL_{2n},\GL_n\times \GL_n,\theta_{n,n})$ with \[\theta_{n,n}=\Ad\begin{pmatrix}I_n & \\ & -I_n  \end{pmatrix}\] a \textit{JR pair} (a Jacquet--Rallis pair). Moreover we will say that 
the pair $(\GL_{2n}(\F),\GL_n(\F)\times \GL_n(\F))$ is of \textit{JR type}. 
\end{definition}

\section{Pairs of Gelfand-Kazhdan and of Gelfand type}

From now on, and untill Section \ref{section archimedean case} which is concerned with the archimedean setting, we focus on the non archimedean case. Here we recall the main idea from \cite{GK75}, which allows to reduce multiplicity one statements for 
irreducible representations of $p$-adic groups to statements on invariant distributions over such groups. 

Let $\G$ be an l-group (locally compact totally disconnected). For $\pi$ a smooth representation of 
$\G$, we denote by 
$\pi^\vee$ the smooth contragredient of $\pi$. We denote by $\Irr(\G)$ the set isomorphism classes of smooth admissible irreducible representations of $\G$, 
and by $\Irr_{\H\dist}(\G)$ that of isomorphism 
classes of $\H$-distinguished representations of $\G$ inside $\Irr(\G)$, i.e. those $\pi$ which satisfy \[\Hom_{\H}(\pi,\CC)\neq \{0\}.\]

Denoting by $\mathcal{C}_c^\infty(\G)$ the space of smooth functions from 
$\G$ to $\CC$ with compact support, we set \[\mathcal{D}(\G)=\Hom_{\CC}(\mathcal{C}_c^\infty(\G),\CC)\] and call it the space of distributions on $\G$. Note that 
$\mathcal{C}_c^\infty(\G)$ is naturally equipped with actions of $\G$ by left and right translations, and it thus makes 
sense to talk about distributions on $\G$ invariant on the left or on the right, under the action of a subgroup of $\G$. Similarly, 
any bicontinuous (anti)-automorphism of $\G$ gives birth to an automorphism of $\mathcal{C}_c^\infty(\G)$, hence of $\cD(\G)$.

\begin{definition}\label{definition GK pair}
Let $\G$ be an l-group and $\H$ be a closed subgroup of $\G$. We say that the pair $(\G,\H)$ is of \textit{Gelfand-Kazhdan type}, or GK type in short, if 
there exists a continuous anti-involution $i$ of $\G$ such that any $\H$-bi-invariant distribution in $\mathcal{D}(\G)$ is fixed by $i$.
\end{definition}

A form of the following result can be found in \cite{GK75}; we state it as in \cite[Lemma 4.2]{PrasadDist}.

\begin{proposition}\label{proposition GK type implies GP2}
If $(\G,\H)$ is of GK type, then:
\[\dim_{\mathbb{C}}(\Hom_{\H}(\pi,\mathbb{C})).\dim_{\mathbb{C}}(\Hom_{\H}(\pi^\vee,\mathbb{C}))\leq 1\] for any 
$\pi\in \Irr(\G)$.
\end{proposition}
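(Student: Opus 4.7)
The plan is to implement the classical Gelfand--Kazhdan trick (\cite{GK75}, or more accurately \cite{BZ76}), in the formulation recalled in \cite[Lemma~4.2]{PrasadDist}. Fix $\pi \in \Irr(\G)$ with underlying space $V$, and set $a := \dim_\CC \Hom_\H(\pi,\CC)$, $b := \dim_\CC \Hom_\H(\pi^\vee,\CC)$; the goal is $ab \le 1$. The central construction produces a bilinear injection
\[
\Psi \colon \Hom_\H(\pi,\CC) \otimes_\CC \Hom_\H(\pi^\vee,\CC) \hookrightarrow \cD(\G)^{\H,\H}
\]
into the space of $\H$-bi-invariant distributions, by the formula $\Psi(\ell \otimes \ell')(\phi) := \ell'\bigl(\ell \circ \pi(\phi)\bigr)$ for $\phi \in \mathcal{C}_c^\infty(\G)$. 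Admissibility of $\pi$ ensures that $\pi(\phi)\colon V \to V$ has finite-dimensional image, so $\ell \circ \pi(\phi)$ lies in the smooth dual $V^\vee$ and $\ell'$ can be evaluated on it. A short change-of-variables calculation, using the $\H$-invariance of $\ell$ (resp.\ $\ell'$), yields left (resp.\ right) $\H$-invariance of $T_{\ell, \ell'} := \Psi(\ell \otimes \ell')$; injectivity of $\Psi$ follows from the irreducibility of $\pi$ via a standard density argument on the orbit of $\ell$ under the Hecke algebra.

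Next I would analyze the action of $i^*$ on the image of $\Psi$. Since every anti-involution commutes with inversion, pullback by $i$ converts the matrix coefficient $g \mapsto \langle \pi(g)v, \tilde v\rangle$ of $\pi$ into a matrix coefficient of $\pi^\vee$ with $v$ and $\tilde v$ interchanged. This produces an isomorphism $\iota\colon \Hom_\H(\pi,\CC) \xrightarrow{\sim} \Hom_\H(\pi^\vee,\CC)$ with the property that $i^* \circ \Psi = \Psi \circ \sigma$, where $\sigma$ is the standard swap of factors on $A \otimes A$ with $A := \Hom_\H(\pi,\CC)$ (identified with $\Hom_\H(\pi^\vee,\CC)$ via $\iota$).

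Finally, the GK hypothesis gives $i^* = \mathrm{id}$ on $\cD(\G)^{\H,\H}$; injectivity of $\Psi$ then forces $\sigma = \mathrm{id}$ on $A \otimes A$, which is only possible when $\dim A \le 1$, and therefore $ab \le 1$. The principal technical obstacle is the second step: verifying that $i^*$ transports through $\Psi$ to the swap. This rests on identifying the anti-representation $g \mapsto \pi(i(g))$ with the contragredient $\pi^\vee$ as a $\G$-representation, which is routine for the inversion anti-involution and can be adapted with care to the general case; once it is in place, the final collapse is a purely linear-algebraic matter.
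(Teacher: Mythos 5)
The paper offers no argument of its own here: it simply cites \cite[Lemma~4.2]{PrasadDist} (and points to \cite{GK75}, \cite{BZ76}). So your proposal is a reconstruction, and it is worth comparing carefully. Your first two building blocks are sound and are indeed part of the classical argument: the bilinear map $\Psi(\ell\otimes\ell')(\phi)=\ell'(\ell\circ\pi(\phi))$ lands in $\cD(\G)^{\H,\H}$, it is injective on $\Hom_\H(\pi,\CC)\otimes\Hom_\H(\pi^\vee,\CC)$ (this does require a Schur-type argument on $(V^\vee)^{\oplus r}$, but it is true), and the pullback $i^*\Psi(\ell\otimes\ell')$ is the analogous distribution $\Psi'(\ell'\otimes\ell)$ built from $\pi^\vee$ using $\ell'\in\Hom_\H(\pi^\vee,\CC)$ and $\ell\in\Hom_\H((\pi^\vee)^\vee,\CC)=\Hom_\H(\pi,\CC)$.

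The gap is in the next step. The transport identity you obtain is $i^*\circ\Psi=\Psi'\circ\mathrm{swap}$, where $\Psi'$ is the map attached to $\pi^\vee$ and $\mathrm{swap}\colon\Hom_\H(\pi,\CC)\otimes\Hom_\H(\pi^\vee,\CC)\to\Hom_\H(\pi^\vee,\CC)\otimes\Hom_\H(\pi,\CC)$. You instead assert $i^*\circ\Psi=\Psi\circ\sigma$ with $\sigma$ a swap on $A\otimes A$, which requires an isomorphism $\iota\colon\Hom_\H(\pi,\CC)\xrightarrow{\sim}\Hom_\H(\pi^\vee,\CC)$ together with the compatibility $\Psi'=\Psi\circ(\iota^{-1}\otimes\iota)$. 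Neither is available: there is no canonical map between these two spaces (they need not even have the same dimension a priori), and the matrix-coefficient remark you give swaps $V\leftrightarrow V^\vee$, not $\Hom_\H(\pi,\CC)\leftrightarrow\Hom_\H(\pi^\vee,\CC)$. Moreover, if such an $\iota$ did exist one could indeed conclude $\dim A\le 1$ from the swap being trivial, which is essentially the statement of the theorem — the assertion is circular. Note also that in the PTB setting an anti-involution $\tau$ with $\pi^\vee\cong\pi\circ\tau$ (which would give $\iota$) is explicitly unavailable for index $d>2$; this is the very difficulty the paper works around in Section~5. The actual Gelfand--Kazhdan argument (as in \cite[Lemma~4.2]{PrasadDist}) exploits the convolution structure: one studies $\beta(\phi_1,\phi_2)=T_{\ell,\ell'}(\phi_1*\phi_2)$, observes it factors through the surjection $\phi_1\mapsto\ell\circ\pi(\phi_1)$ onto $V^\vee$, and uses the GK symmetry $\beta(\phi_1,\phi_2)=\beta(\phi_2^i,\phi_1^i)$ to obtain a factoring in the second variable as well; comparing the two and using a separating $\phi_0$ with $\ell_1\circ\pi(\phi_0)=0\neq\ell_2\circ\pi(\phi_0)$ (which exists for linearly independent $\ell_1,\ell_2$ by a Schur-lemma argument) then forces $\dim\Hom_\H(\pi,\CC)\le 1$ and, by symmetry, $\dim\Hom_\H(\pi^\vee,\CC)\le 1$. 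The tensor-product picture alone does not see this convolution structure and cannot produce the bound.
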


When $\F$ has characteristic zero, general criteria are given in \cite{AG09} to check that a pair $(\G,\H)$ of symmetric type is of GK type, one can check with some work 
that they do apply in the case of pairs of PTB type, and we will in fact use them in Section \ref{section archimedean case}. However, there is one simple case where a shorter proof can be given, which is when the double cosets 
$\H g \H$ are stable under an anti-involution $i$. 

\begin{proposition}\label{proposition double cosets stability implies GK type}
Let $(\G,\H)$ be of symmetric type. Suppose moreover that there exists a continuous anti-involution $i$ of $\G$ such that 
$i(\H g \H)=\H g \H$ for all $g\in \G$, then $(\G,\H)$ is of GK type.
\end{proposition}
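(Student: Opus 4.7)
The plan is to apply a standard localization argument in the Gelfand--Kazhdan tradition, as developed in \cite{BZ76}. Setting $g = e$ in the hypothesis yields $i(\H) = \H$, so the continuous involution $D \mapsto i_* D$ of $\cD(\G)$ defined by $(i_* D)(f) := D(f \circ i)$ restricts to an involution of the subspace $\cD(\G)^{\H \times \H}$ of $\H$-bi-invariant distributions; indeed, since $i$ is an anti-involution, $i_*$ interchanges left and right $\H$-invariance, and both are assumed simultaneously. The goal is to show this induced involution is the identity on $\cD(\G)^{\H \times \H}$: the conclusion, via Proposition \ref{proposition GK type implies GP2}, will then follow.

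Next, I would set up the $(\H \times \H)$-action on $\G$ by $(h_1,h_2)\cdot g = h_1 g h_2^{-1}$, whose orbits are exactly the $\H$-double cosets. The anti-involution $i$ is equivariant for this action via the continuous group automorphism $(h_1,h_2) \mapsto (i(h_2), i(h_1))$ of $\H \times \H$, and the hypothesis says $i$ preserves each orbit. One then filters $\G$ by $(\H \times \H)$-stable open subsets $U_0 \subset U_1 \subset \cdots$ whose successive complements $U_{k+1} \setminus U_k$ are unions of orbits of ``constant type''; such an exhaustion exists because for pairs of symmetric type the $\H$-double cosets are locally closed in $\G$. The short exact sequences
\[
0 \to \cD(U_{k+1}\setminus U_k)^{\H \times \H} \to \cD(U_{k+1})^{\H \times \H} \to \cD(U_k)^{\H \times \H},
\]
together with a descending induction on $k$ and an $i_*$-equivariant version of these sequences, reduce the problem to showing that every $\H$-bi-invariant distribution supported on a single orbit $\mathbf{C} = \H g_0 \H$ is $i_*$-fixed.

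To conclude, I would invoke Frobenius descent to identify this space of distributions supported on $\mathbf{C}$ with a one-dimensional space of twisted invariants for the stabilizer of $g_0$ in $\H \times \H$. The hypothesis $i(\mathbf{C}) = \mathbf{C}$, together with the $i$-equivariance of the action described above, forces $i_*$ to act trivially on this one-dimensional space, which completes the argument. The main subtle point is the local-closedness of the double cosets needed to organize the exhaustion: this is standard for $p$-adic symmetric spaces but is precisely where the assumption that $(\G,\H)$ is of symmetric type is used, and it is implicit in the distribution-theoretic machinery of \cite{BZ76} being applied here.
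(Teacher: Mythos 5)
Your proposal is, in substance, an unpacking of the Bernstein--Zelevinsky localization argument that the paper's proof invokes wholesale by citing \cite[Theorems 6.13 and 6.15]{BZ76}; so the two proofs follow the same route, with the paper citing and you sketching. Two places in your sketch are asserted rather than argued, and both are exactly where the cited theorems carry the load. First, after reducing to $\H$-bi-invariant distributions supported on a single $i$-stable orbit $\mathbf{C}$, you note (via Frobenius descent) that this space is at most one-dimensional and that $i_*$ preserves it; but since $i_*$ is an involution, this a priori only tells you it acts by $\pm 1$, and the claim that the sign is $+1$ needs a further input. The standard fix in the totally disconnected setting is positivity: the invariant distribution on a single orbit is, up to scalar, an invariant Radon measure; $i_*$ of a positive measure is positive; and a nonzero positive measure cannot equal its own negative. (Equivalently, one runs the argument on $E - i_* E$, which is $i_*$-anti-invariant, and shows its restriction to any minimal orbit in its support must vanish.) Second, to set up the descending induction you need more than local closedness of the double cosets: the $\H \times \H$-action on $\G$ has to be constructible (countably separated) in the sense of \cite{BZ76} so that the exhaustion $U_0 \subset U_1 \subset \cdots$ exists and the induction is well-founded; your sketch treats this as automatic, whereas it is precisely a hypothesis that \cite[Theorem 6.15]{BZ76} supplies and verifies. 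Neither point derails the plan, but each is where ``apply \cite{BZ76}'' is not a formality.
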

\begin{proof}
This is a consequence of \cite[Theorems 6.13 and 6.15]{BZ76} (which provide a strengthened version of the classical result of \cite{GK75})
applied to the natural action of $\H\times \H$ on $\G$ and the homeomorphism $i$ of $\G$.
\end{proof}

In general one is more interested in multiplicity one. To this end we make the following definition.

\begin{definition}\label{definition G pair}
Let $\G$ be an l-group and $\H$ a closed subgroup of $\G$. We say that $(\G,\H)$ is of \textit{Gelfand type} if 
\[\dim_{\mathbb{C}}(\Hom_{\H}(\pi,\mathbb{C}))\leq 1\] for any $\pi\in \Irr(\G)$.
\end{definition}

Note that Proposition \ref{proposition GK type implies GP2} is not enough to conclude that a pair of GK type is of Gelfand type. However it obviously is when 
$\pi$ and $\pi^\vee$ are $\H$-distinguished 
together for all $\pi\in \Irr(\G)$. We will see in Section \ref{section double cosets} that pairs of PTB type are of GK type, and in 
Section \ref{section self-duality and multiplicity one} that if $(\G,\H)$ is of PTB type, then all $\pi\in \Irr_{\H\dist}(\G)$ are self-dual.

\section{Stability of the double cosets under the anti-involution}\label{section double cosets}

The main result of \cite{Guo97} is that if $(\G,\H)$ is of Guo type, the set of double cosets $\H g \H\subset \G$ are stabilized by $i:g\mapsto g^{-1}$. 
In this section we deduce from Guo's result that this property remains true for any PTB pair by a Galois descent argument. First, we explain the modifications needed in Guo's proof to see that it remains valid as soon as the characteristic of $\F$ is not $2$.

\subsection{Guo's result in positive characteristic}

We recall that Guo considers PTB pairs of index $1$ and $2$, and that we only need to consider the first case for our purpose. So when 
$d=1$, we will here extend Guo's result on stability of double cosest to any field of characteristic different from $2$, 
where Guo only considers the characteristic zero case. 

The only places of \cite{Guo97} which apparently require characteristic zero are \cite[Lemma 3.2 (1) and (2) and second statement of Lemma 3.3]{Guo97} which make use of the exponential map.
We will show that when the characteristic of $\F$ is different from $2$, these lemma can be proved without this tool in a simpler manner. 
We first recall Guo's notations. Because we are only interested in the case $d=1$, we realize $\G$ as the group of matrices given in block form by 
\[\G=\{ g(\alpha,\beta)=\begin{pmatrix} \alpha & \beta \\ \overline{\beta} & \overline{\alpha} \end{pmatrix} \in \GL_{2n}(\E)\},\] where 
$x\mapsto \overline{x}$ is the conjugation of $\E/\F$. In particular this means that Guo's element $\gamma$ (\cite[p. 276]{Guo97}) is equal to $1$.  
The group $\H$ then becomes \[\H=\{ g(\alpha,0)\in \G\}.\] We further introduce the matrix \[\epsilon=\d.\begin{pmatrix} \I_n & 0 \\ 0 & -\I_n \end{pmatrix}\in \H.\] In particular 
$\H$ is the set of matrices in $\G$ commuting 
with $\e$. We introduce the symmetric space 
\[\S =\{s\in \G, \e s \e^{-1}=s^{-1}\}.\] The map \[\rho: g\mapsto g\epsilon g^{-1} \epsilon^{-1}\] induces a homeomorphism between 
$\G/\H$ and $\S$, as shown in \cite[p. 282]{Guo97} and $\S$ is stable under $\H$-conjugation because for example 
\[\rho(h_1 g h_2)=h_1\rho(g)h_1^{-1}\] for $h_i\in \H$. In particular two elements of $\G$ are in the same $\H$-double coset if and only if their image under $\rho$ are $\H$-conjugate. 

We recall from \cite[p. 282]{Guo97} again that \begin{equation}\label{equation equations of S} g(\alpha,\beta)\in \S \Leftrightarrow \alpha\beta=\beta \overline{\alpha} \ \& \ \alpha^2=\I_n +\beta \overline{\beta}.\end{equation}  
We denote by $\U$ the set of unipotent elements in $\G$ and put $\U_\S=\U \cap \S$. We now prove \cite[Lemma 3.2 (2)]{Guo97} for fields of characteristic not $2$; \cite[Lemma 3.2 (1)]{Guo97} is only needed in \cite[Lemma 3.3]{Guo97} but we will reprove this lemma as well without using it. So what we want to prove is:

\begin{lemma}{\cite[Lemma 3.2 (2)]{Guo97}}\label{lemma Guo 3.2 (2)}
One has \[\H \U_{\S} \H=\{g\in \G,\ \rho(g)\in \U_{\S}\}\] and all $\H$-double cosets in this set are stable under $i$.
\end{lemma}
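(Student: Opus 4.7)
The plan is to replace Guo's use of the exponential map---the only source of his characteristic-zero hypothesis---by an elementary square-root construction that works in any characteristic $\neq 2$. Specifically, for any unipotent element $v\in\GL_{2n}(\E)$, the formal binomial series $(1+x)^{1/2}=\sum_{k\ge 0}\binom{1/2}{k}x^k$, whose coefficients equal $(-1)^{k-1}C_{k-1}/2^{2k-1}$ with $C_{k-1}$ a Catalan number and thus have denominators that are powers of $2$, truncates when applied to the nilpotent element $v-\I$ and produces a well-defined unipotent element $v^{1/2}$ satisfying $(v^{1/2})^{2}=v$. Because $v^{1/2}$ is itself a polynomial in $v$, any algebra automorphism of $\GL_{2n}(\E)$ sending $v$ to $v^{\pm 1}$ sends $v^{1/2}$ to $v^{\pm 1/2}$.

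The first substantive step is to show that $\rho$ restricts to a surjection $\U_\S\to\U_\S$. For $u\in\U_\S$, the relation $\e u^{-1}\e^{-1}=u$ gives
\[\rho(u)=u\e u^{-1}\e^{-1}=u\cdot u=u^{2},\]
which is unipotent and still lies in $\S$. Conversely, given $v\in\U_\S$, the element $u:=v^{1/2}$ is unipotent as a polynomial in $v$, and $\e u\e^{-1}=(\e v\e^{-1})^{1/2}=(v^{-1})^{1/2}=u^{-1}$, so $u\in\U_\S$ with $\rho(u)=v$. Combined with the equivariance $\rho(h_{1}gh_{2})=h_{1}\rho(g)h_{1}^{-1}$ for $h_{1},h_{2}\in\H$ (which uses only that $\H$ centralizes $\e$) and the bijection $\G/\H\simeq\S$ already recalled in the excerpt, this yields both inclusions of $\H\U_\S\H=\{g\in\G:\rho(g)\in\U_\S\}$: the inclusion $\subseteq$ is formal since $\U_\S$ is $\H$-conjugation stable, while for $\supseteq$ one observes that any $g$ with $\rho(g)\in\U_\S$ has the same image under $\rho$ as $\rho(g)^{1/2}\in\U_\S$, and hence lies in the same left $\H$-coset by bijectivity of $\rho$ on $\G/\H$.

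Finally the stability under $i$ is a one-line observation: for $u\in\U_\S$, the defining equation $\e u\e^{-1}=u^{-1}$ together with $\e\in\H$ shows that $u^{-1}\in\H u\H$, so $i(\H u\H)=\H u^{-1}\H=\H u\H$, and by the first step every double coset in $\H\U_\S\H$ has a representative in $\U_\S$. I do not expect any serious obstacle: the only delicate point is verifying the $2$-power denominator of $\binom{1/2}{k}$, which is precisely where the hypothesis $\mathrm{char}\,\F\neq 2$ enters and substitutes for the $\exp/\log$ tools Guo used in characteristic zero.
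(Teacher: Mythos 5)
Your proof is correct and follows essentially the same route as the paper's. Both arguments hinge on the observation that squaring is a bijection on $\U$ (restricting to one on $\U_\S$) in characteristic $\neq 2$, and then deduce the two inclusions and the $i$-stability identically; the paper simply asserts this bijectivity, whereas you make it explicit via the truncated binomial series with $2$-power denominators --- a justification of the same step, not a different argument.
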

\begin{proof}
Take $u\in \U_\S$. Then $\rho(u)=u^2\in \U_\S$ and this implies that $\H \U_{\S} \H\subset \{g\in \G,\ \rho(g)\in \U_{\S}\}$ because $\rho(h_1 g h_2)=h_1\rho(g) h_1^{-1}$ for $h_i\in \H$. For the converse we first observe that because $\F$ has characteristic different from $2$, the map $\mathrm{sq}:x\mapsto x^2$ is a bijection from $\U$ to itself. This implies that $\mathrm{sq}$ restricts as a bijection from $\U_\S$ to itself: indeed if $\e x\e^{-1}=x^{-1}$ clearly $\e x^2\e^{-1}=x^{-2}$ hence $\mathrm{sq}$ stabilizes $\U_\S$, on the other hand suppose that $x\in \U_\S$, then $x=y^2$ for a unique $y\in \U$ but then 
$(\e y\e^{-1})^{-1}$ is also a square root of $x$ in $\U$ hence it is equal to $y$ so $y\in \U_\S$. Now take $g$ such that $\rho(g)=u\in \U_{\S}$, in particular $u=v^2$ for a unique $v\in \U_\S$, but then $\rho(g)=\rho(v)$ so $g\in \H v\H\subset \H \U_{\S} \H$. 
Finally for $u\in \U_\S$, one has $u^{-1}=\e u\e^{-1}\in \H u \H$, from which the second statement follows.
\end{proof}

It remains prove \cite[second statement of Lemma 3.3]{Guo97} for fields of characteristic not $2$. We put 
\[w=\begin{pmatrix} 0 & I_n \\ I_n & 0 \end{pmatrix}\] so that $w$ is of order $2$, $w\e w^{-1}=-\e$ and $w$ normalizes $\H$. We first prove the following result which does not appear in \cite{Guo97}.

\begin{proposition}\label{proposition Guo prop 2 mais plus fort}
For $u\in \U_\S$, then $\overline{u}=wuw$ is $\H$-conjugate to $u$.
\end{proposition}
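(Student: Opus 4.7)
The plan is a Galois descent argument. Let $\sigma$ denote the non-trivial element of $\Gal(\E/\F)$, acting on $\GL_{2n}(\E)$ via $g\mapsto w\overline{g}w^{-1}$, with fixed subgroup $\G$. Let $\H'$ be the centralizer of $\epsilon$ in $\GL_{2n}(\E)$, so that $\H'\cong\GL_n(\E)\times\GL_n(\E)$ via the block-diagonal identification; then $\H'$ is $\sigma$-stable with induced action $(a,d)\mapsto(\overline{d},\overline{a})$, and its fixed subgroup is $\H$. The strategy is to first produce an element of $\H'$ conjugating $u$ to $\overline{u}$, and then to descend to $\H$ via Lemma~\ref{lemma trivial cohomology}.

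For the first step, the Cayley transform $u\mapsto(u-I)(u+I)^{-1}$ is well-defined on $\U$ since $\mathrm{char}(\F)\neq 2$, is $\H$-equivariant and commutes with bar, and restricts to a bijection from $\U_\S$ onto the nilpotent part of the $-1$-eigenspace of $\Ad(\epsilon)$ on $M_{2n}(\E)$. The statement is thus equivalent to showing that for a nilpotent $X=\begin{pmatrix}0 & b\\\overline{b} & 0\end{pmatrix}$ (with $b\overline{b}$ nilpotent), $X$ and $\overline{X}=\begin{pmatrix}0 & \overline{b}\\ b & 0\end{pmatrix}$ are conjugate under $\H'$. This amounts to finding $(a,d)\in\GL_n(\E)\times\GL_n(\E)$ with $abd^{-1}=\overline{b}$ and $d\overline{b}a^{-1}=b$, equivalently, to producing an isomorphism between the representations $(V,V,b,\overline{b})$ and $(V,V,\overline{b},b)$ of the quiver with two vertices and two opposite arrows, subject to the nilpotency condition on the composition. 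Since bar is a field automorphism, it preserves the combinatorial invariants classifying the $\H'$-orbits on nilpotent elements of the symmetric space; hence $X$ and $\overline{X}$ share the same invariant and are $\H'$-conjugate over $\E$.

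For the second step, given $h'\in\H'$ with $h'uh'^{-1}=\overline{u}$, set $c:=h'^{-1}\sigma(h')$. Since $\sigma(u)=u$ and $\sigma(\overline{u})=\overline{u}$, the element $\sigma(h')$ also conjugates $u$ to $\overline{u}$, so $c$ lies in the centralizer $\H'_u$, and $\sigma^2=1$ yields the cocycle relation $c\sigma(c)=1$. The centralizer $\H'_u$ is the group of units of a $\sigma$-stable finite-dimensional $\E$-subalgebra $\mathcal{C}$ of $M_n(\E)\times M_n(\E)$, so Galois descent writes $\mathcal{C}=\mathcal{C}_0\otimes_\F\E$ for the $\F$-subalgebra $\mathcal{C}_0:=\mathcal{C}^\sigma$. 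Lemma~\ref{lemma trivial cohomology} then gives $\H^1(\Gal(\E/\F),\mathcal{C}^\times)=\{1\}$, so $c=d\sigma(d)^{-1}$ for some $d\in\H'_u$; the element $h:=h'd$ is $\sigma$-fixed, hence lies in $\H$, and still satisfies $huh^{-1}=\overline{u}$ because $d$ commutes with $u$.

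The main obstacle is the first step: establishing the $\H'$-conjugacy of $u$ and $\overline{u}$ over $\E$ requires either a classification (of Kostant--Rallis type) of $\H'$-orbits on unipotents in the Jacquet--Rallis symmetric space, or a direct construction of the conjugator. Once that is in hand, the second step is a direct application of the cohomological Lemma~\ref{lemma trivial cohomology}.
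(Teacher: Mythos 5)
Your overall architecture is sound and genuinely different from the paper's, but the proof as written has a gap at exactly the point you yourself flag as ``the main obstacle'': the claim that $X=g(0,b)$ and $wXw=g(0,\overline{b})$ are $\H'$-conjugate is asserted, not proved. Saying that ``bar preserves the combinatorial invariants classifying the $\H'$-orbits'' presupposes (i) that the $\GL_n(\E)\times\GL_n(\E)$-orbits of nilpotent anti-diagonal pairs $(b,c)$ are classified over an arbitrary field by the rank sequences of the alternating words in $b$ and $c$, and (ii) that these invariants for $(b,\overline{b})$ and $(\overline{b},b)$ coincide. Both points are true, but neither is established in your write-up, and (i) is precisely where the real work lies: it is the content of Guo's Lemmas 2.2 and 2.3 (an $\F$-rational normal form for the nilpotent element $g(0,\beta)$ under the twisted conjugation $\beta\mapsto g\beta\overline{g}^{-1}$), which is exactly what the paper's proof invokes. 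As it stands, your argument reduces the proposition to an unproved classification statement rather than proving it.

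The two surrounding steps are correct and worth comparing with the paper. The Cayley-transform reduction is valid ($C(u)=(u-I)(u+I)^{-1}$ is $\H$-equivariant, commutes with $\Ad(w)$, and maps $\U_\S$ bijectively onto the nilpotent elements of the form $g(0,b)$), and it elegantly packages $\alpha$ and $\beta$ into a single matrix $b$; the paper instead normalizes $\alpha$ first (conjugating it to a real unipotent Jordan form via Guo's Lemma 3.2(3)) and must then check that the conjugator found for $\beta$ also commutes with $\alpha$, using that $\alpha$ is a polynomial in $\alpha^2$ in characteristic $\neq 2$. Your descent step via the vanishing of $\H^1(\Gal(\E/\F),\H'_u)$ is correct and is precisely the mechanism of Lemmas \ref{lemma trivial cohomology} and \ref{lemma Galois descent}, which the paper deploys for Theorem \ref{theorem stability of double cosets} but not here: its proof of this proposition avoids cohomology entirely by extracting the conjugator $g(x,0)\in\H$, with $x=\overline{g}^{-1}g$, directly from the $\F$-rational normal form. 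So your step 2 is a clean alternative to the paper's explicit construction, but step 1 must be completed, either by proving the orbit classification over $\E$ or by quoting Guo's Lemmas 2.2 and 2.3 as the paper does.
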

\begin{proof}
We write $u=g(\alpha,\beta)$. By \cite[Lemma 3.2 (3)]{Guo97}, the proof of which is valid in characteristic different from $2$ (because then $2^n$ is invertible, see \cite[beginning of p. 285]{Guo97}), the element $\alpha$ is unipotent, hence conjugate 
by an element $y\in \GL_n(\E)$ to a unipotent element $\alpha'\in \mathcal{M}_n(\F)$ in Jordan form (i.e. with nilpotent part in 
Jordan form). Setting $h=g(y,0)\in \H$, then $u'=h u h^{-1}= g(\alpha',\beta')\in \U_\S$, so we could suppose from the beginning 
that $\overline{\alpha}=\alpha$, which is what we do. Moreover the relation $\alpha^2=\I_n+\beta\overline{\beta}$ given by Equation (\ref{equation equations of S}) tells us that $\beta\overline{\beta}$ is nilpotent, or equivalently that $X=g(0,\beta)$ is nilpotent (see \cite[computation before Lemma 2.2]{Guo97}). Now \cite[Lemmas 2.2 and 2.3]{Guo97} and their proof apply to $X$ in any characteristic, and imply that there is $g\in \GL_n(\E)$ such that $g\beta \overline{g}^{-1}$ is in Jordan form, and in particular with coefficients in $\F$. This reads $g\beta \overline{g}^{-1}= \overline{g}\overline{\beta} g^{-1}$ hence 
$x\beta=\overline{\beta}\overline{x}$ for $x:=\overline{g}^{-1}g\in \GL_n(\E)$. This in turn implies that $x\alpha^2=\overline{\alpha^2}x$ because $\alpha^2=\I_n+\beta\overline{\beta}$. Because $\overline{\alpha}=\alpha$ we deduce that $x$ and $\alpha^2$ commute, but because the characteristic of 
$\F$ is not $2$ and $\alpha$ and $\alpha^2$ are unipotent, the element $\alpha$ is a polynomial in $\alpha^2$, hence 
$x$ commutes with $\alpha$. Now 
\[g(x,0)g(\alpha,\beta)=g(x\alpha,x\beta)=g(\alpha x,\overline{\beta}\overline{x})=g(\alpha,\overline{\beta})g(x,0)= 
g(\overline{\alpha},\overline{\beta}) g(x,0)
\] so that $u$ is indeed conjugate to $\overline{u}$ by $g(x,0)\in \H$.
\end{proof}

\begin{lemma}{\cite[Lemma 3.3, second statement]{Guo97}} 
If $g\in \H \U_{\S} w\H$, then $g^{-1}\in \H \U_{\S} w\H$.
\end{lemma}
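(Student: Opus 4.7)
Write an arbitrary element of $\H \U_\S w \H$ as $g = h_1 u w h_2$ with $h_1,h_2 \in \H$ and $u \in \U_\S$. Since $w^2 = 1$, inverting gives
\[ g^{-1} \; = \; h_2^{-1}\, w\, u^{-1}\, h_1^{-1} \; = \; h_2^{-1}\, (w u^{-1} w^{-1})\, w\, h_1^{-1}. \]
The first thing to notice is that $\U_\S$ is stable under inversion: if $\e u \e^{-1} = u^{-1}$ and $u$ is unipotent, then $u^{-1}$ is unipotent and $\e u^{-1} \e^{-1} = u$, so $u^{-1} \in \U_\S$.

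The main step is then to rewrite $w u^{-1} w^{-1} = \overline{u^{-1}}$ as an element of $\H u^{-1} \H$. This is precisely what Proposition \ref{proposition Guo prop 2 mais plus fort} provides, applied to $u^{-1} \in \U_\S$: there exists $h_3 \in \H$ with
\[ w u^{-1} w^{-1} \; = \; \overline{u^{-1}} \; = \; h_3\, u^{-1}\, h_3^{-1}. \]
Substituting back yields
\[ g^{-1} \; = \; (h_2^{-1} h_3)\, u^{-1}\, (h_3^{-1} w)\, h_1^{-1}. \]

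To finish, I would move the factor $h_3^{-1}$ across $w$. Since $w$ normalizes $\H$ (recalled just before Proposition \ref{proposition Guo prop 2 mais plus fort}), we have $h_3^{-1} w = w h_3'$ for some $h_3' \in \H$, and therefore
\[ g^{-1} \; = \; (h_2^{-1} h_3)\, u^{-1}\, w\, (h_3'\, h_1^{-1}) \; \in \; \H\, \U_\S\, w\, \H, \]
as desired. I do not expect a real obstacle here: the only non-formal ingredient is the $\H$-conjugacy of $\overline{u}$ and $u$ from Proposition \ref{proposition Guo prop 2 mais plus fort}, and everything else is direct manipulation using $w^2 = 1$, stability of $\U_\S$ under inversion, and the normalization of $\H$ by $w$.
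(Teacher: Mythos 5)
Your proof is correct and relies on the same key ingredient as the paper's, namely Proposition \ref{proposition Guo prop 2 mais plus fort} giving $\H$-conjugacy of $u$ and $\overline{u}=wuw$ for $u\in\U_\S$. The route is slightly more direct than the paper's: you invert the coset decomposition $g=h_1uwh_2$, observe that $\U_\S$ is stable under inversion, apply the proposition to $u^{-1}$, and slide a factor of $\H$ past $w$ using that $w$ normalizes $\H$. The paper instead reduces to showing $u\in\H wuw\H$ by first invoking $\H u^{-1}\H=\H u\H$ (from Lemma \ref{lemma Guo 3.2 (2)}), then passes through the map $\rho$ to recast this as $\H$-conjugacy of $\rho(u)=u^2$ and $\rho(wuw)=\overline{u}^2$, which again reduces to Proposition \ref{proposition Guo prop 2 mais plus fort}. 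The paper's formulation makes explicit that $g^{-1}$ actually lands in the \emph{same} double coset $\H g\H$ (which is what \cite[Proposition 3]{Guo97} ultimately needs), whereas your argument a priori only places $g^{-1}$ in $\H u^{-1}w\H$; but since $u^{-1}=\e u\e^{-1}$ with $\e\in\H$ and $w$ normalizes $\H$, one has $\H u^{-1}w\H=\H uw\H$, so your proof yields the same conclusion with this one extra remark. Either way, the statement as given is fully established.
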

\begin{proof}
It is enough to prove that if $u$ belongs to $\U_\S$, then $w^{-1}u^{-1}\in \H u w\H$, i.e. 
$u^{-1}\in w\H u \H w=\H wu w \H$. However from Lemma \ref{lemma Guo 3.2 (2)} we have 
$\H u^{-1}\H=\H u \H$, hence we need to show that $u\in \H w u w \H$, i.e. that 
$\rho(w u w)=(w uw )^2= w u^2 w=  \overline{u}^2$ is conjugate to $\rho(u)=u^2$ by an element of $\H$, which follows from 
Proposition \ref{proposition Guo prop 2 mais plus fort}. 
\end{proof}

In particular we have extended the following to non-archimedean local fields of characteristic $\neq 2$:

\begin{proposition}{\cite[Proposition 3]{Guo97}}\label{proposition Guo prop 3}
If $g\in \G$, then $g^{-1}\in \H g \H$.
\end{proposition}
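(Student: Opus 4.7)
The plan is to combine Lemmas~\ref{lemma Guo 3.2 (2)} and the preceding lemma with a Jordan-decomposition reduction inside the symmetric space $\S$. Recall that the map $\rho\colon g\mapsto g\e g^{-1}\e^{-1}$ induces a bijection between the $\H$-double cosets of $\G$ and the $\H$-conjugacy orbits in $\S$; a direct calculation gives
\[\rho(g^{-1})=g^{-1}\rho(g)^{-1}g=(g^{-1}\e)\rho(g)(g^{-1}\e)^{-1},\]
so the proposition is equivalent to showing that $g^{-1}\e\in\H\cdot\G_{\rho(g)}$ for every $g\in\G$, where $\G_{\rho(g)}$ denotes the centralizer of $\rho(g)$ in $\G$.

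I would first analyse $\S$ through its Jordan decomposition: each $s\in\S$ writes uniquely as $s=s_{\mathrm{s}}s_{\mathrm{u}}$ with $s_{\mathrm{s}}$ semisimple, $s_{\mathrm{u}}$ unipotent, and commuting. Applying $\Ad(\e)$ and using uniqueness of the decomposition shows that both factors lie in $\S$. After replacing $g$ by a representative of the same $\H$-double coset I may assume that $s_{\mathrm{s}}$ is in a distinguished normal form: the centralizer $\G_{s_{\mathrm{s}}}$ then splits as a product of factors indexed by the eigenspaces of $s_{\mathrm{s}}$. The $\pm 1$-eigenspaces contribute smaller groups of Guo type (to which I would apply induction on dimension), while each pair $\{\lambda,\lambda^{-1}\}$ with $\lambda\neq\pm 1$ contributes a $\GL$-block on which the restriction of $\Ad(\e)$ exchanges the two eigenspaces, producing a pair of Jacquet--Rallis flavour.

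The unipotent $s_{\mathrm{u}}$ then lies in the symmetric subspace of $\G_{s_{\mathrm{s}}}$ cut out by $\Ad(\e)$, so it decomposes as a product of smaller unipotent symmetric elements, one per factor. On each Guo-type factor the inductive hypothesis applies, with base case Lemma~\ref{lemma Guo 3.2 (2)} (whose key content is simply $u^{-1}=\e u\e^{-1}\in\H u\H$). On each paired-eigenvalue $\GL$-block the eigenspace-swap plays the role of the element $w$ of the preceding lemma, so that the corresponding piece falls inside an analogue of $\H\U_\S w\H$ and is $i$-stable by that lemma. Reassembling the pieces produces an element of $\H\cdot\G_{s_{\mathrm{s}}}$ realising the required conjugation of $\rho(g)$ to $\rho(g^{-1})$, and thus $g^{-1}\in\H g\H$.

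The main obstacle will be the bookkeeping in the second step: one has to check that $\H\cap\G_{s_{\mathrm{s}}}$ splits compatibly with the eigenspace decomposition, that each factor really is of Guo or Jacquet--Rallis type, and that the piece of $s_{\mathrm{u}}$ sitting in each paired block genuinely lies in the stratum covered by the preceding lemma. The explicit matrix realisation $\G=\{g(\alpha,\beta)\}$ with $\e=\d\cdot\diag(\I_n,-\I_n)$ appears unavoidable at this point, but crucially no further use of the exponential map is required, so the argument should go through in any characteristic different from $2$.
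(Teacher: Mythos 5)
The paper does not give an independent proof of Proposition~\ref{proposition Guo prop 3}. Its argument is simply that, after Lemma~\ref{lemma Guo 3.2 (2)} and the second statement of Guo's Lemma~3.3 have been reproved without the exponential map, the \emph{remainder} of Guo's original proof of his Proposition~3 applies verbatim in any characteristic different from~$2$; the proposition is then asserted as an immediate consequence. Your proposal, by contrast, attempts to reconstruct Guo's reduction from scratch, so you are doing genuinely more work than the paper does and in a different register.

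As a reconstruction, your first paragraph is correct and matches the setup the paper imports from Guo: $\rho$ identifies $\H$-double cosets with $\H$-orbits in $\S$, the identity $\rho(g^{-1})=(g^{-1}\e)\rho(g)(g^{-1}\e)^{-1}$ holds because $\e^2$ is central, and the proposition is equivalent to $g^{-1}\e\in\H\cdot\G_{\rho(g)}$. But the reduction in the second and third paragraphs has a genuine problem in exactly the regime the paper cares about: Jordan (semisimple--unipotent) decomposition in $\GL_N$ is not in general defined over the base field when the field is not perfect, and the non-archimedean local fields of positive characteristic $p>2$ that the paper is extending to (i.e.\ $\mathbb{F}_q((t))$) are not perfect. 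An element $s\in\S$ can have minimal polynomial of the form $f(X^{p^a})$ with $f$ separable, and then $s_{\mathrm{s}}$ lives only over a purely inseparable extension. Unless you show that the constraint $\e s\e^{-1}=s^{-1}$ forces separability of the minimal polynomial (which it does not obviously do), your decomposition $s=s_{\mathrm{s}}s_{\mathrm{u}}$ with both factors in $\G$ is unavailable, and the whole eigenspace analysis that follows breaks down. This is precisely the kind of characteristic-dependent tool the paper is at pains to avoid, and flagging only the exponential map as problematic is the content of the sentence preceding the reproved lemmas; your sketch would reintroduce a second such dependence.

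Beyond that, the claim that the paired-eigenvalue $\GL$-blocks ``fall inside an analogue of $\H\U_\S w\H$'' and are therefore handled by the lemma about $\H\U_\S w\H$ is asserted rather than argued; that lemma concerns the specific element $w=\begin{pmatrix}0&I_n\\I_n&0\end{pmatrix}$ and the specific subset $\H\U_\S w\H$ of $\G$, and you have not exhibited a map from your paired-eigenvalue blocks into that stratum. You yourself list these verifications as ``the main obstacle,'' which is fair, but they are exactly the content that would constitute a proof. As it stands the proposal is a plausible-sounding outline of what Guo's reduction \emph{might} look like, not a proof, and it differs from the paper's treatment, which deliberately avoids redoing Guo's reduction and only patches the two lemmas where the exponential map intervened.
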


\subsection{Descent and double cosets}\label{subsection descent and double cosets}

This paragraph is a preparation for our Galois descent argument. Let $\Gamma$ be a finite group acting on a group $\tK$ via group automorphisms. Let $\tK'$ be a
 $\Gamma$-stable subgroup of $\tK$. Set $\K:={\tK}^\Gamma$ and 
 $\K':=\tK'^\Gamma$.

 \begin{lemma}\label{lemma shaun} For $k$ in $\K$, if the non-abelian
   cohomology set $\H^1 (\Gamma , k\tK' k^{-1}\cap \tK' )$ is trivial, then we
   have
  \[
   (\tK' k \tK')^\Gamma = \K' k \K'  .
   \]
 \end{lemma}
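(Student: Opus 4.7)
The plan is to prove the two inclusions separately. The easy direction $\K' k \K' \subseteq (\tK' k\tK')^\Gamma$ is immediate: if $x,y \in \K'=(\tK')^\Gamma$, then, since $k$ is also $\Gamma$-fixed, every $\gamma\in\Gamma$ sends $xky$ to $\gamma(x)k\gamma(y) = xky$, and $xky$ plainly lies in $\tK'k\tK'$.

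For the harder direction, take $g \in (\tK'k\tK')^\Gamma$ and pick a (non-canonical) factorization $g = xky$ with $x,y \in \tK'$. Applying $\gamma$ and using $\gamma(k)=k$, the equality $\gamma(x)k\gamma(y)=xky$ rearranges to
\[
c_\gamma := x^{-1}\gamma(x) = k\bigl(y\gamma(y)^{-1}\bigr)k^{-1}.
\]
The left expression lies in $\tK'$ (since $\tK'$ is $\Gamma$-stable) and the right one in $k\tK'k^{-1}$, so $c_\gamma \in \tK' \cap k\tK'k^{-1}$. A straightforward computation using the cocycle identity $(\gamma\gamma')(x)=\gamma(\gamma'(x))$ shows $c_{\gamma\gamma'}=c_\gamma\,\gamma(c_{\gamma'})$, so $\gamma \mapsto c_\gamma$ is a $1$-cocycle with values in $\tK'\cap k\tK'k^{-1}$, the latter being $\Gamma$-stable because $k$ is $\Gamma$-fixed and $\tK'$ is $\Gamma$-stable.

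By the hypothesis that $\H^1(\Gamma,k\tK'k^{-1}\cap\tK')$ is trivial, the cocycle is a coboundary: there exists $h \in \tK'\cap k\tK'k^{-1}$ with $c_\gamma = h^{-1}\gamma(h)$ for every $\gamma$. Writing $h = kh'k^{-1}$ with $h'\in\tK'$, I would set
\[
x_0 := xh^{-1}, \qquad y_0 := h'y,
\]
and check that both are $\Gamma$-fixed. For $x_0$, the identity $x^{-1}\gamma(x)=h^{-1}\gamma(h)$ is exactly the condition $\gamma(xh^{-1})=xh^{-1}$. For $y_0$, combine $\gamma(h)=k\gamma(h')k^{-1}$ with $h^{-1}\gamma(h) = k(y\gamma(y)^{-1})k^{-1}$ to obtain $(h')^{-1}\gamma(h')=y\gamma(y)^{-1}$, which rearranges to $\gamma(h'y)=h'y$. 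Finally $hk = kh'$, so
\[
g = xky = (xh^{-1})(hk)y = x_0\,k\,y_0 \in \K' k\K',
\]
completing the inclusion.

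The main obstacle is purely bookkeeping: one must use the factorization $h = kh'k^{-1}$ to move the ``correction'' $h$ from the left of $k$ to the right, and then verify that the two corrected factors are both genuinely $\Gamma$-fixed. The non-trivial input is entirely encoded in the triviality of the cohomology set, which lets us split the cocycle $c_\gamma$; everything else is a direct manipulation of the defining relation.
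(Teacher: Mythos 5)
Your proof is correct and is exactly the ``standard cocycle calculation'' that the paper invokes without writing out (it simply cites \cite{St01}, Lemma 2.1): the cocycle $c_\gamma=x^{-1}\gamma(x)$ valued in $\tK'\cap k\tK'k^{-1}$, its splitting by the vanishing of $\H^1$, and the transfer of the correction $h=kh'k^{-1}$ across $k$ all check out. No gaps.
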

 
\begin{proof} This is a standard cocycle calculation. An
 ``abstract'' proof may be found in \cite{St01}, Lemma 2.1. 
\end{proof}

From this we deduce the following second lemma:

  \begin{lemma}\label{lemma Galois descent} With the notation as above, assume that $\kappa$ is an anti-involution of $\tK$ which fixes all double 
  cosets $\tK' \tilde{k} \tK' \subset \tK$ and that:

\begin{enumerate}
\item for all $\sigma\in \Gamma$, $\sigma$ and $\kappa$ commute,
\item for all $k\in \K$, $\H^1 (\Gamma , k\tK' k^{-1}\cap \tK' )=\{1\}$.
\end{enumerate}

Then all double cosets $\K' k \K'\subset \K$ are fixed by $\kappa$.
  \end{lemma}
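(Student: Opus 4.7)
The plan is to reduce the statement directly to Lemma \ref{lemma shaun}, using hypothesis (1) to move $\kappa$ past the $\Gamma$-action at each step. The key observation is that, under the assumptions, $\kappa$ restricts to an anti-involution of the fixed-point subgroup $\K$ stabilizing $\K'$, after which the double-coset statement over $\tK$ descends mechanically to one over $\K$.

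First I would verify that $\kappa$ preserves $\K$ and $\K'$. For any $x \in \K = \tK^\Gamma$, hypothesis (1) gives $\sigma(\kappa(x)) = \kappa(\sigma(x)) = \kappa(x)$ for every $\sigma \in \Gamma$, so $\kappa(x) \in \K$. Applying the stabilization hypothesis to the trivial double coset $\tK' = \tK' \cdot 1 \cdot \tK'$ yields $\kappa(\tK') = \tK'$, and combining this with the previous observation shows $\kappa(\K') \subseteq \K'$.

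Next, fix $k \in \K$. By assumption $\kappa$ fixes the double coset $\tK' k \tK'$, so $\kappa(k) \in \tK' k \tK'$; by the previous step $\kappa(k) \in \K$, hence $\kappa(k) \in (\tK' k \tK')^\Gamma$. Hypothesis (2) together with Lemma \ref{lemma shaun} identifies this set with $\K' k \K'$, so $\kappa(k) \in \K' k \K'$. Finally, since $\kappa$ is an anti-involution stabilizing $\K'$,
\[
\kappa(\K' k \K') \;=\; \kappa(\K')\,\kappa(k)\,\kappa(\K') \;=\; \K' \kappa(k) \K' \;=\; \K' k \K',
\]
which is the desired conclusion.

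There is no real obstacle in the argument once Lemma \ref{lemma shaun} is in hand; the whole content of the statement is concentrated in that cohomological descent, and everything else is formal bookkeeping about how $\kappa$ interacts with $\Gamma$-invariants. In the application to PTB pairs the substantive work will instead lie in verifying hypothesis (2), i.e.\ that the relevant intersections $k \tK' k^{-1} \cap \tK'$ have trivial $\H^1$, and this is where Lemma \ref{lemma trivial cohomology} will do the heavy lifting.
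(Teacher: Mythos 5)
Your proof is correct and follows essentially the same route as the paper's: invoke Lemma \ref{lemma shaun} to identify $(\tK' k \tK')^\Gamma$ with $\K' k \K'$, and use hypothesis (1) to see that $\kappa$ carries $\K$ and $\K'$ to themselves, so that $\kappa(k)$ lands in $(\tK' k \tK')^\Gamma$. You are slightly more explicit than the paper in two places — deriving $\kappa(\tK') = \tK'$ from the stabilization of the trivial double coset in order to justify $\kappa(\K') \subseteq \K'$, and spelling out the final line $\kappa(\K' k \K') = \K' \kappa(k) \K' = \K' k \K'$ — but these are the same steps the paper leaves implicit.
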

\begin{proof}  Since the actions of $\Gamma$ and $\kappa$ commute,
  $\kappa$ stabilizes $\K$ and $\K'$. For $k\in \K$, we have $\kappa (k) \in \tK' k\tK'$ by hypothesis. By Lemma 
  \ref{lemma shaun}, this implies: \[\kappa (k)\in (\tK' k\tK' )\cap \K =(\tK' k\tK' )^\Gamma=\K' k\K'.\] 
\end{proof}

\subsection{GK property for PTB pairs}
 Here we deduce the generalization of Guo's result from Proposition \ref{proposition Guo prop 3} and Lemma \ref{lemma Galois descent}.  We refer to \cite{Reiner03} for standard facts about the Hasse invariant of central simple algebras used in the proof. 
 
\begin{theorem}\label{theorem stability of double cosets}
Let $(\G,\H)$ be of PTB type. The involution $i:g\mapsto g^{-1}$ stabilizes each double coset $\H g \H\subset \G$.
\end{theorem}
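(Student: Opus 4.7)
The plan is to apply the Galois descent of Lemma \ref{lemma Galois descent} to reduce to the Guo case, which is settled by the positive-characteristic extension of Proposition \ref{proposition Guo prop 3} established above. The first step is to construct a finite Galois extension $k/\F$, with Galois group $\Gamma$, such that $k$ splits $\A$ (so that $\tA := \A \otimes_\F k$ is a matrix algebra $\Mat_{nd}(k)$) yet $k$ does \emph{not} contain $\E$ (so that $\E \otimes_\F k$ remains a quadratic field extension of $k$). By Hasse-invariant theory, any extension of $\F$ of degree divisible by $d = \ind(\A)$ splits $\A$; when $d$ is odd, or when $\E/\F$ is ramified, the unramified extension of $\F$ of degree $d$ already has both desired properties. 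The remaining case---$d$ even with $\E/\F$ unramified---is handled by replacing the unramified splitting field with a suitable tamely and totally ramified Kummer-type extension of $\F$, whose maximal unramified subextension can be arranged to avoid $\E$.

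Having fixed such a $k$, set $\tG := \tA^\times$ and $\tH := \tA_\E^\times$. Since $\tA$ is a split central simple $k$-algebra and $\E \otimes_\F k$ is a quadratic field extension of $k$ embedded in $\tA$, the double centralizer theorem identifies $\tA_\E$ with $\Mat_{nd/2}(\E \otimes_\F k)$. Thus $(\tG, \tH)$ is a Guo-type pair over $k$ with respect to the quadratic extension $\E \otimes_\F k$, and since $k$ has characteristic different from $2$, the extension of Proposition \ref{proposition Guo prop 3} implies that every double coset $\tH \tilde{g} \tH \subset \tG$ is stable under $\tilde{g} \mapsto \tilde{g}^{-1}$.

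The final step is to apply Lemma \ref{lemma Galois descent} with $\tK = \tG$, $\tK' = \tH$, $\Gamma = \Gal(k/\F)$ acting via the second factor on $\A \otimes_\F k$, and $\kappa = i \colon g \mapsto g^{-1}$. The invariants satisfy $\tG^\Gamma = \G$ and $\tH^\Gamma = \H$, and $\kappa$ commutes with $\Gamma$ because $\Gamma$ acts by $\F$-algebra automorphisms; the double-coset-stability hypothesis has just been verified. The remaining input is the vanishing $\H^1(\Gamma,\, g \tH g^{-1} \cap \tH) = \{1\}$ for each $g \in \G$. Here I would argue that since $g \in \G = \tA^\Gamma$, the $k$-subalgebra $\widetilde{\B} := g \tA_\E g^{-1} \cap \tA_\E$ of $\tA$ is $\Gamma$-stable; Galois descent for $k$-vector spaces then yields $\widetilde{\B} = \B \otimes_\F k$ where $\B := \widetilde{\B}^\Gamma$ is a finite-dimensional $\F$-algebra. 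Consequently $g \tH g^{-1} \cap \tH = (\B \otimes_\F k)^\times$, and the required cohomological vanishing is exactly Lemma \ref{lemma trivial cohomology}. I expect the only genuine difficulty to lie in the construction of $k$ in the exceptional case of even index with unramified $\E$; once $k$ is in hand, the rest is formal descent.
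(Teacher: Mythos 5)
Your overall strategy---descend along a Galois extension via Lemma \ref{lemma Galois descent}, with Lemma \ref{lemma trivial cohomology} supplying the cohomological vanishing---coincides with the paper's, and the cocycle/cohomology part of your argument is correct. The gap is at the very first step: a single Galois splitting field $k/\F$ of $\A$ that avoids $\E$ need not exist, so the one-shot descent cannot always be launched.

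Concretely, take $\F=\mathbb{Q}_3$, $\E$ the unramified quadratic extension of $\F$, and $\A$ of index $d=4$. Any Galois extension $k/\F$ splitting $\A$ has degree divisible by $4$. Write $\Gamma=\Gal(k/\F)$ with wild inertia $P$ (a $3$-group), tame inertia quotient $I/P$ cyclic of order $e_0$ prime to $3$, and unramified quotient $\Gamma/I$ cyclic of order $f$. Then $v_2(|\Gamma|)=v_2(e_0)+v_2(f)$, and $e_0$ divides $q^f-1=3^f-1$. If $f$ were odd, then $3^f\equiv 3\ (\mathrm{mod}\ 8)$ gives $v_2(3^f-1)=1$, so $v_2(|\Gamma|)\le 1$, contradicting $4\mid |\Gamma|$. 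Hence $f$ is even, i.e.\ the unramified part of $k$ already contains $\E$. So no Galois splitting field of $\A$ is linearly disjoint from $\E$ over $\F$ in this case, and the ``tamely and totally ramified Kummer-type'' extension you invoke does not exist: a totally ramified Galois extension of $\mathbb{Q}_3$ has degree $2^a 3^b$ with $a\le 1$ (the tame part is cyclic of order dividing $q-1=2$, the wild part a $3$-group).

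The paper avoids this by induction on the index $d$. At each step one descends along a Galois extension $\L$ of \emph{prime} degree over the \emph{current} base field---an odd prime factor of $d$ (e.g.\ unramified) when $d$ is not a power of $2$, or a quadratic extension of ramification opposite to that of $\E$ when it is---chosen linearly disjoint from the image of $\E$. This strictly decreases the index, and the inductive hypothesis (not Guo's theorem directly) supplies the double-coset stability for the enlarged pair $(\tG,\tH)$. What your proposal misses is that the successive extensions form a tower which need not be Galois over the original $\F$; the hypothesis of Lemma \ref{lemma Galois descent} is met at each stage of the tower even though, as the example above shows, there may be no single Galois extension doing the whole descent at once.
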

\begin{proof}
Our proof is by induction on $d$. If $d=1$, the statement is Proposition \ref{proposition Guo prop 3}. We now suppose that 
the result is true for all pairs $(\G',\H')$ of PTB type with index $1\leq d'<d$. The index of $\A$ is the denominator of 
the Hasse invariant \[\inv_\F(\A)=\frac{r}{d}\in \mathbb{Q}/\mathbb{Z}\] (where $\mathrm{gcd}(r,d)=1$). Moreover if $\L$ is a finite extension of $\A$, then \[\inv_\L(\A\otimes_\F \L)=\frac{[\L:\F]r}{d}\in \mathbb{Q}/\mathbb{Z}.\] It follows that one can choose a Galois extension $\L$ linearly disjoint of $\E$ over $\F$, such that the index $d'$ of $\A \otimes_\F \L$ is smaller than $d$: if $d$ is not a power of $2$ take $\L$ any Galois extension of $\F$ of degree an odd prime factor of $d$ (for example unramified) whereas if $d$ is a power of $2$, take $\L$ to be a quadratic extension of $\F$ with ramification opposite to that of $\E/\F$. Now set $\tG=(\A\otimes_\F \L)^\times$ and 
$\tH=(\A_\E\otimes_\F \L)^\times=((\A\otimes_\F \L)_\E)^\times$. The pair $(\tG,\tH)$ still of PTB type with involution $\theta=\mathrm{Ad}(\delta)$ again. Hence by induction $i$ acts as the identity on 
$\tH \backslash \tG/\tH$. Moreover $i$ commutes with the elements of $\Gamma:=\Gal(\L/\F)$. Take $g\in \G$. Since $g\tH g^{-1}\cap \tH=(g\A_\E g^{-1}\cap \A_\E)\otimes_{\F} \L$ and because $g\A_\E g^{-1}\cap \A_\E$ is an $\F$-algebra, by Lemma \ref{lemma trivial cohomology} we have $\H^1 (\Gamma , g\tH g^{-1}\cap \tH )=\{1\}$. Applying Lemma \ref{lemma Galois descent} then concludes the proof of the theorem.
\end{proof}

Together with Proposition \ref{proposition double cosets stability implies GK type}, Theorem \ref{theorem stability of double cosets} implies:

\begin{corollary}\label{corollary PTB implies GK}
A pair of PTB type is of GK type.
\end{corollary}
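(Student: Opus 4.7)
The plan is simply to combine the two results immediately preceding this corollary. A PTB pair is, by Definition \ref{definition PTB pair}, a symmetric pair $(\bG,\bH,\theta)$, and the inversion map $i:g\mapsto g^{-1}$ is a continuous anti-involution of the l-group $\G=\A^\times$ (continuity being automatic for inversion in a topological group). Theorem \ref{theorem stability of double cosets} asserts precisely that $i$ stabilizes every double coset $\H g\H\subset \G$, so the hypotheses of Proposition \ref{proposition double cosets stability implies GK type} are met and the corollary follows.

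There is no real obstacle at this stage; the substance of the argument has already been carried out in Theorem \ref{theorem stability of double cosets}, whose proof reduces the general PTB case of index $d$ to Guo's result (the case $d=1$, extended to characteristic $\neq 2$ in the preceding subsection). The reduction goes by base-changing to a Galois extension $\L/\F$ linearly disjoint from $\E$ and chosen so that $\inv_{\L}(\A\otimes_\F\L)$ has strictly smaller denominator than $\inv_\F(\A)$, then invoking the Galois descent Lemma \ref{lemma Galois descent} together with the vanishing of the relevant $\H^1$ coming from Lemma \ref{lemma trivial cohomology}. All that remains to record here is the formal match between Theorem \ref{theorem stability of double cosets} and the definition of a GK pair.
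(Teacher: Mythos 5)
Your argument matches the paper exactly: the corollary is stated immediately after the sentence ``Together with Proposition \ref{proposition double cosets stability implies GK type}, Theorem \ref{theorem stability of double cosets} implies,'' which is precisely the combination you describe. Correct, and nothing more is needed.
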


\begin{remark}
When $\F$ has characteristic zero, the result above can also be deduced from the criteria given in \cite{AG09}. In fact this is the method that we shall use in Section \ref{subsection GK archimedean}.
\end{remark}

\section{Self-duality and multiplicity one}\label{section self-duality and multiplicity one}

In this section we show that if $(\G,\H)$ is of PTB type then it is of GK type, and if moreover $\pi\in \Irr_{\H\dist}(\G)$ then $\pi=\pi^\vee$. The reason why we do this is that if the index $d$ of $(\G,\H)$ is greater than $2$, then there is no obvious analog of Guo's anti-involution $\tau$ of $\G$ stabilizing $\H$ and satisfying $\pi^\vee(g)\simeq \pi(\tau(g^{-1}))$ for $\pi \in \Irr(\G)$. Indeed setting $\G=\A^\times$, the anti-involution $\tau$ of \cite[p.3]{Guo97} is the the restriction of an anti-involution of $\A$, but if $d>2$ then $\A$ possesses no anti-involution otherwise its class in the Brauer group of $\F$ would be of order $\leq 2$, which is not the case by hypothesis. So we do not proceed as in \cite{Guo97} to directly deduce multiplicity one from the GK type property. We start with the cuspidal case where 
a globalization argument of \cite{PSP08} allows us to deduce the self-duality result from the $\F$-split case. We then deduce the general case  from the cuspidal one, 
by proving self-duality for $\H$-distinguished standard modules using the geometric lemma Bernstein and Zelevinsky ($p$-adic Mackey theory).
First we recall the following results from \cite{JR96} and \cite{Guo97} for use in the cuspidal case:

\begin{theorem}\label{theorem self-duality for JR and Guo pairs}{$[$\cite{JR96},\cite{Guo97}$]$.}
Let $(\G,\H)$ be of JR type or of Guo type, then all $\pi\in \Irr_{\H\dist}(\G)$ are self-dual.
\end{theorem}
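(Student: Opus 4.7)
The plan is to invoke the Gelfand--Kazhdan method with two distinct anti-involutions, following \cite{JR96} and \cite{Guo97}. In both settings the algebra $\A$ is split of index $1$, so $\G$ is (isomorphic to) a general linear group over $\F$ and admits a natural continuous anti-involution $\tau$: matrix transposition for a JR pair, or a suitable conjugate of transposition adapted to the embedding $\E\subset\A$ for a Guo pair. By construction $\tau$ stabilizes $\H$, and the classical Gelfand--Kazhdan theorem for $\GL_N$ supplies the key identity $\pi^\vee(g)\simeq\pi(\tau(g^{-1}))$ for every $\pi\in\Irr(\G)$; equivalently, $\pi^\vee\simeq\pi\circ(\tau\circ i)$ as representations of $\G$, where the composition $\tau\circ i$ is an automorphism.

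The geometric core of each of the two cited papers is an orbit analysis establishing that, in addition to being stable under $i:g\mapsto g^{-1}$, every $\H$-double coset $\H g\H\subset\G$ is also stable under $\tau$. By Proposition \ref{proposition double cosets stability implies GK type} applied to $\tau$, every $\H$-biinvariant distribution on $\G$ is then fixed by both $i$ and $\tau$.

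Self-duality now follows formally. Since $\tau\circ i$ stabilizes $\H$, precomposition by $\tau\circ i|_\H$ yields $\Hom_\H(\pi,\CC)\simeq\Hom_\H(\pi\circ\tau\circ i,\CC)\simeq\Hom_\H(\pi^\vee,\CC)$, so $\pi$ and $\pi^\vee$ are simultaneously $\H$-distinguished. Combined with Proposition \ref{proposition GK type implies GP2}, this forces both Hom-spaces to be one-dimensional and already delivers the multiplicity one bound. To upgrade the statement to an actual isomorphism $\pi\simeq\pi^\vee$, one uses the $\tau$-invariance of $\H$-biinvariant distributions: starting from nonzero $\ell\in\Hom_\H(\pi,\CC)$ and $\ell^\vee\in\Hom_\H(\pi^\vee,\CC)$, the associated $\H$-biinvariant matrix coefficient distribution on $\G$ is $\tau$-symmetric, and under the identification $\pi^\vee\simeq\pi\circ(\tau\circ i)$ this symmetry produces a nonzero $\G$-equivariant map $\pi\to\pi^\vee$; Schur's lemma then completes the proof.

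The main obstacle --- and the real substance of \cite{JR96} and \cite{Guo97} --- is the orbit analysis on $\H\backslash\G/\H$ needed to establish the $\tau$-stability of double cosets. Once this geometric input is in place, the Gelfand--Kazhdan deduction above is purely formal, and the two cases (JR and Guo) are essentially parallel.
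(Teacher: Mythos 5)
The statement is a quotation: the paper does not prove it, but cites \cite{JR96} and \cite{Guo97}. Your proposal is a reconstruction of the argument that appears there (and which the paper itself reproduces in adapted form for the archimedean case in the proof of Theorem~\ref{theorem PTB over R is G}). Most of the structure is right: the two ingredients are (a) the ``transpose''-type anti-involution $\tau$ of $\G$ stabilizing $\H$ with $\pi^\vee\simeq\pi\circ\tau\circ i$, which immediately gives that $\pi$ and $\pi^\vee$ are distinguished together, and (b) an invariance property of $\H$-bi-invariant distributions coming from the orbit analysis, which is then combined with (a) to manufacture a nonzero intertwining map $\pi^\vee\to\pi$ and conclude by Schur.

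However, step (b) as you state it has a genuine gap. You claim that the orbit analysis in \cite{JR96} and \cite{Guo97} shows each double coset $\H g\H$ is stable under $\tau$ as well as under $i$, and you then apply Proposition~\ref{proposition double cosets stability implies GK type} to $\tau$ to get $\tau$-invariance of distributions. But what those papers actually prove (quoted in this paper as Proposition~\ref{proposition Guo prop 3}) is $i$-stability only: $\H g^{-1}\H=\H g\H$. No $\tau$-stability of cosets is established there, and it is not the input to the self-duality argument. The correct route is: $i$-stability of cosets gives, via Proposition~\ref{proposition double cosets stability implies GK type}, that $\H$-bi-invariant distributions are fixed by $i$; since $\theta=\Ad(\delta)$ with $\delta\in\H$, $\H$-bi-invariance already forces $\theta$-invariance, so these distributions are equivalently fixed by $\sigma=\theta\circ i$. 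It is this $\sigma$-invariance, paired with the identification $\pi^\vee\simeq\pi\circ\tau\circ i$ and a convolution computation on a distribution of the form $\phi\mapsto\ell'(I^{-1}(\pi^*(\phi)\ell))$, that produces the intertwiner --- exactly as carried out in the proof of Theorem~\ref{theorem PTB over R is G}. In short, $\tau$ enters only through the $\GL$-duality $\pi^\vee\simeq\pi\circ\tau\circ i$; the distributional invariance used is under $i$ (equivalently $\sigma$), not under $\tau$. Replacing your appeal to $\tau$-stability of cosets by this closes the gap.
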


\subsection{The cuspidal case}
In this section the pair $(\G,\H)$ is of PTB type over $\F$, with $\G=\A^\times$ and $\H=\A_\E^\times$.

\begin{proposition}\label{proposition self duality of dist cuspidal reps}
Let $\rho$ be an $\H$-distinguished cuspidal representation of $\G$, then $\rho$ is self-dual.
\end{proposition}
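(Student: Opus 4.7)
The plan is to globalize $\rho$ as the local component at a chosen place of a cuspidal automorphic representation which is globally distinguished, and then exploit strong multiplicity one together with the self-duality at places where the pair becomes of JR or Guo type (Theorem \ref{theorem self-duality for JR and Guo pairs}) in order to conclude $\rho\simeq \rho^\vee$.

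First I would fix a global field $k$ of characteristic different from $2$ with a distinguished place $v_0$ such that $k_{v_0}\simeq \F$, a quadratic extension $\mathbb{E}/k$ realizing $\E/\F$ at $v_0$, and a central simple $k$-algebra $\mathbb{A}$ of the same degree as $\A$ and containing $\mathbb{E}$, arranged so that $\mathbb{A}\otimes_{k} k_{v_0}\simeq \A$ as $\F$-algebras, compatibly with the two embeddings of $\E$. Setting $\mathbb{G}=\mathbb{A}^\times$ and $\mathbb{H}$ for the centralizer of $\mathbb{E}^\times$ in $\mathbb{G}$, the pair $(\mathbb{G},\mathbb{H})$ is a global analogue of a PTB pair whose localization at $v_0$ recovers $(\G,\H)$.

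Next I would invoke the globalization results of \cite{PSP08} and \cite{GL18} to produce a cuspidal automorphic representation $\Pi=\otimes'_w \Pi_w$ of $\mathbb{G}(\mathbb{A}_k)$ which is globally $\mathbb{H}$-distinguished (carries a non-zero period integral over $\mathbb{H}(k)Z\backslash \mathbb{H}(\mathbb{A}_k)$) and whose local component at $v_0$ is isomorphic to $\rho$. Factorization of the period into local $\mathbb{H}_w$-invariant functionals forces $\Pi_w$ to be $\mathbb{H}_w$-distinguished at every place $w$. Now at almost every place $w$ the algebra $\mathbb{A}$ is split, so that $\mathbb{G}_w\simeq \GL_{2m}(k_w)$, and, according to whether $\mathbb{E}$ splits at $w$ or is inert, the local pair $(\mathbb{G}_w,\mathbb{H}_w)$ is of JR type with $\mathbb{H}_w\simeq \GL_m(k_w)\times \GL_m(k_w)$, or of Guo type with $\mathbb{H}_w\simeq \GL_m(\mathbb{E}_w)$. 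In both situations Theorem \ref{theorem self-duality for JR and Guo pairs} gives $\Pi_w\simeq \Pi_w^\vee$.

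Finally, applying the strong multiplicity one theorem for inner forms of $\GL_n$ (\cite{BaduGJL08}, \cite{BR17}) to the cuspidal automorphic representations $\Pi$ and $\Pi^\vee$, which by the preceding step coincide at almost all places, yields $\Pi\simeq \Pi^\vee$; specializing at $v_0$ gives $\rho\simeq \rho^\vee$. The main obstacle to be checked in carrying this out is that the globalization step can be performed while simultaneously preserving cuspidality, global $\mathbb{H}$-distinction and the prescribed local component at $v_0$: this is precisely what the cited works of Prasad--Schulze-Pillot and Gan--Lomel\'\i{} (adapted to our PTB setting, and to positive characteristic when $\F$ has positive characteristic) have to provide.
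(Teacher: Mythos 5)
Your proposal follows essentially the same route as the paper's proof: globalize the quadratic extension and the central simple algebra, use \cite{PSP08} and \cite{GL18} to realize $\rho$ as the local component at $v_0$ of a globally distinguished cuspidal automorphic representation, apply Theorem \ref{theorem self-duality for JR and Guo pairs} at the split places (JR type when $\mathbb{E}$ splits, Guo type when it does not) to get self-duality there, and conclude by strong multiplicity one for inner forms of $\GL_n$. The one concrete point you leave implicit but the paper makes explicit is the positive-characteristic case: there, to invoke the Gan--Lomel\'\i{} globalization, one additionally arranges by Brauer--Hasse--Noether that the global algebra $\mathfrak{A}$ be a central \emph{division} algebra over the global field.
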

\begin{proof}
Let $\mathfrak{e}/\mathfrak{f}$ be a quadratic extension of global fields and $v_0$ a place of $\mathfrak{f}$ which remains non split in $\mathfrak{e}$, such that 
$\mathfrak{e}_{v_0}\simeq \E$ and 
$\mathfrak{f}_{v_0}\simeq \F$. Let $\mathfrak{A}$ be a central simple $\mathfrak{f}$-algebra such that $\mathfrak{A}_{v_0}\simeq \A$, and if the characteristic
of $\F$ is positive we moreover require that $\mathfrak{A}$ is a central division $\mathfrak{f}$-algebra (this is possible thanks to the 
so-called Brauer-Hasse-Noether theorem for which we refer to \cite[Theorem 1.12]{PR91}). Let $\mathbb{A}_{\mathfrak{f}}$ be the ring of adeles of 
$\mathfrak{f}$, then by \cite[Theorem 4.1]{PSP08} and \cite[Theorem 1.3]{GL18} there is a cuspidal automorphic 
representation $R$ of $\mathfrak{G}=(\mathfrak{A}\otimes_{\mathfrak{f}} \mathbb{A}_{\mathfrak{f}})^\times$ such that 
 $\rho=R_{v_0}$ which has non-zero period with respect to $\mathfrak{H}=\mathfrak{G}_{\mathfrak{e}^\times}$. 
 In particular for any finite place $v$ of $\mathfrak{f}$, the local component 
 $R_{v}$ is $\mathfrak{H}_{v}$-distinguished. If moreover $v$ is such that $\mathfrak{G}_v$ is split over $\mathfrak{f}_v$ (which is the case of all finite places except possibly a finite number), 
 it follows from the two cases of Theorem \ref{theorem self-duality for JR and Guo pairs} depending on whether
 $v$ splits or not inside $\mathfrak{e}$ that $R_v$ is self-dual. One can then apply the strong multiplicity one theorems of 
 \cite{BaduGJL08} and \cite{BR17} to deduce that $R$ is self-dual, hence $\rho=R_{v_0}$ as well.
\end{proof}

\subsection{Reminder on the Langlands classification for $\Irr(\G)$}\label{subsection langlands classification}

Here we recall important facts from \cite{BZ77}, \cite{S78}, \cite{DKV84}, \cite{T90} and \cite{Badu02}. For $\pi$ a smooth representation of $\GL_n(\D)$ with 
$\D$ a central division $\F$-algebra, we set $n=n(\pi)$.  

Let $\rho$ be a cuspidal representation 
of $\GL_n(\F)$ and $\nu$ the character of $\GL_n(\F)$ defined in Section \ref{section notation}. Take $a\leq b\in \mathbb{R}$ with $b-a\in \mathbb{N}$, then the normalized parabolic induction 
\[\nu^{a}\rho\times \nu^{a+1}\rho\times \dots \times \nu^{b}\rho\] has a unique irreducible quotient which we denote by 
$\DD=[\nu^{b}\rho,\dots,\nu^{a}\rho]$. This quotient $\DD$ uniquely determines the sequence 
$(\nu^{a}\rho,\dots,\nu^{b}\rho)$ and we call the non-negative integer $l=b-a+1$ the length of $\DD$. We call such a representation $\Delta$ a segment of $\GL_{ln}(\F)$. 
Now if $\rho$ is a cuspidal representation of $\GL_n(\D)$ for $\D$ and $\F$-division algebra of index $d$, the image of $\rho$ by the Jacquet-Langlands transfer is a segment of $\GL_{nd}(\F)$, the length of which we denote by $s_\rho$. We then 
set $\nu_\rho=\nu^{s_\rho}$ for $\nu$ the positive character of $\GL_m(\D)$ defined in Section \ref{section notation}. Again 
the normalized parabolic induction 
\[\nu_\rho^{a}\rho\times \nu_\rho^{a+1}\rho\times \dots \times \nu_\rho^{b}\rho\] has a unique irreducible quotient which we denote by 
$\DD=[\nu_\rho^{b}\rho,\dots,\nu_\rho^{a}\rho]$, and call $l=b-a+1$ the length of $\DD$ again. The representation $\DD$ still uniquely determines the sequence $(\nu_\rho^{a}\rho,\dots,\nu_\rho^{b}\rho)$. If $\Delta$ and $\Delta'$ are two segments  
(say of $\GL_n(\D)$ and $\GL_{n'}(\D)$), we say that $\Delta$ precedes $\Delta'$ if one can write $\Delta=[\nu_\rho^{b}\rho,\dots,\nu_\rho^{a}\rho]$ and $\Delta'=[\nu_\rho^{b'}\rho,\dots,\nu_\rho^{a'}\rho]$ with $b-b'\in \mathbb{Z}$, $b\leq b'$, $b \geq a'-1$ and $a<a'$. 


The Langlands classification of irreducible representations of $\G=\GL_n(\D)$ asserts that if $(\DD_1,\dots,\DD_r)$ is a sequence of 
segments such that $\DD_i$ does not precede $\DD_j$ if $i<j$, then 
\[\DD_1\times \dots \times \DD_r\] has a unique irreducible quotient 
\[\pi=\L(\DD_1,\dots,\DD_r),\] that $\pi$ determines the multi-set 
$\{\DD_1,\dots,\DD_r\}$ and that every irreducible representation of $\G$ can be obtained in this manner. We call 
\[\DD_1\times \dots \times \DD_r\] the \textit{standard module} over $\pi$. 

Finally we recall that if we can write a segment 
\[\DD=[\DD_1,\dots,\DD_t]\] of $\G$ as a concatenation of sub-segments $\DD_i$, and 
$\P=\P_{(n(\DD_1),\dots,n(\DD_t))}$ is the standard parabolic subgroup of $\G$ containing all upper triangular matrices in $\G$ associated with the 
partition $(n(\DD_1),\dots,n(\DD_t))$ of $n$, with standard Levi decomposition \[\P=\M_{(n(\DD_1),\dots,n(\DD_t))} \N_{(n(\DD_1),\dots,n(\DD_t))}=\M \N,\] then the normalized Jacquet module $r_{\M,\G}(\DD)$ is given by the formula 
\[r_{\M,\G}(\DD)=\DD_1\otimes \dots \otimes \DD_t.\] Otherwise if $\M=\M_{(n_1,\dots,n_r)}$ is such that the partition 
$(n_1\dots,n_r)$ of $n$ is not associated to such a concatenation then 
\[r_{\M,\G}(\DD)=0.\]

\subsection{The double classes $\P\backslash \G/ \H$ and distinction of induced representations}

Here we recall results from \cite{O17} and \cite{C18}.

We fix $\G=\GL_n(\D)$ with $n=2m$ if the index $d$ of $\D$ is odd. When $d$ is even we denote by $e_\d$ the matrix $\d.\I_n$ whereas when $d$ is odd we set 
\[e_\d=\begin{pmatrix} & & & & & \d^2\\ 
& & & & \iddots & \\ & & & \d^2 &  & \\ & & 1 & &  & \\
& \iddots & & &  & \\ 1 & & & &  &   \end{pmatrix}.\] In both cases $\G=\H^\theta$ for $\theta=\Ad(e_\d)$. We denote by 
$\P$ the standard parabolic subgroup of $\G$ associated to the partition 
$\overline{n}=(n_1,\dots,n_r)$ of $\G$. When $d$ is even, we denote by $\I(\overline{n})$ the set of $r\times r$ symmetric matrices 
with entries in $\mathbb{N}$ such that the sum of the entries of the $i$-th row is equal to $n_i$, whereas 
if $d$ is odd we denote by $\I(\overline{n})$ the set of $r\times r$ symmetric matrices defined by the same condition plus the requirement that each diagonal coefficient is even. 
In \cite[Sections 2.2 ane 3.2]{C18}, Chommaux associates to each $s\in \I(\overline{n})$ an element that we denote $u_s$ in this paper, and the set 
$(u_s)_{s\in \I(\overline{n})}$ provides a set of representatives of the double-quotient $\P\backslash \G/ \H$. We do not need to describe $u_s$ explicitly here. However we notice that any 
\[s=(n_{i,j})\in \I(\nn)\] naturally defines the subpartition of $\nn$ given by 
$(n_{1,1},n_{1,2},\dots,n_{r,r-1},n_{r,r})$ where the zero elements have been omitted from this ordered sequence. We denote by 
$\P_s=\M_s\N_s$ the standard parabolic subgroup of $\G$ associated to this subpartition of $\nn$. We set $\theta_s$ the involution of $\G$ defined by 
\[\theta_s(x)=u_s\theta(u_s^{-1} x u_s)u_s^{-1},\] the fixed points of which are the group $u_s\H u_s^{-1}$. Then $\P_s$, $\M_s$ and $\N_s$ are $\theta_s$-stable and 
\[\P\cap u_s\H u_s^{-1}=\P_s^{\theta_s}=\M_s^{\theta_s}\N_s^{\theta_s}.\] 
When $d$ is even one has 
\[\M_s^{\theta_s}=\{\diag(a_{1,1},a_{1,2}\dots,a_{r,r-1},a_{r,r}),\ a_{j,i}=\theta(a_{i,j})\}\] whereas 
\[\M_s^{\theta_s}=\{\diag(a_{1,1},a_{1,2}\dots,a_{r,r-1},a_{r,r}),\ a_{i,i}=\theta(a_{i,i}), \ a_{j,i}=wa_{i,j}w^{-1} \ \mathrm{for} \ i\neq j\}\] for $w$ the anti-diagonal long Weyl element when $d$ is odd. We have the following relation between modulus characters: 
\begin{equation}\label{equation modulus even and odd} {\d_{{\P_s}^{\theta_s}}}_{\mid \M_s}= {\d_{\P_s}^{1/2}}_{\mid \M_s}\end{equation} 

\begin{remark}\label{remark correction Marion}
Note that in \cite[Proposition 2.3]{C18}, when $d$ is even, the author obtains the equality ${\d_{{\P_s}^{\theta_s}}}_{\mid \M_s}= {\d_{\P_s}}_{\mid \M_s}$ which is not correct. This is due to the following minor oversight: with 
notations of ibid. 
one should have \[\d_{{\P_s}^{\theta_s}}(t)=\prod_{\{\alpha\in \Phi^+-\Phi_s^+\}} |\alpha(t)|_{\E}^{d/2}\] and 
\[\d_{\P_s}(t)=\prod_{\{\alpha\in \Phi^+-\Phi_s^+\}} |\alpha(t)|_{\E}^{d}.\] This does not affect the rest of Section 2 in \cite{C18}, hence it does no affect the other results of \cite{C18} either.
\end{remark}

Finally \cite[Theorem 1.1]{O17} together with the computation of Jacquet modules of segments and Equation 
(\ref{equation modulus even and odd}) has the following consequence: 

\begin{proposition}\label{proposition sufficient condition for distinction} If the representation 
$\DD_1\times \dots \times \DD_r$ of $\G=\GL_n(\D)$ is $\H$-distinguished, then setting 
$n_i=n(\DD_i)$, there exist $s=(n_{i,j})\in \I(\nn)$, segments $\DD_{i,j}$ with  
$n(\DD_{i,j})=n_{i,j}$ and $\DD_i=[\DD_{i,1},\dots,\DD_{i,r}]$ such that 
\[\DD_{1,1}\otimes \DD_{1,2} \otimes \dots \otimes \DD_{r,1}\otimes \DD_{r,r}\] is 
$\M_s^{\theta_s}$-distinguished. This latter condition is equivalent to $\DD_{j,i}\simeq \DD_{i,j}^\vee$ if 
$i\neq j$ and $\DD_{i,i}$ is $\GL_{n_{i,i}}(\D)_{\E^\times}$-distinguished. 
\end{proposition}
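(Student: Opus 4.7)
The plan is to apply the geometric lemma for symmetric spaces, in the form of Offen's theorem \cite[Theorem 1.1]{O17}, to transfer $\H$-distinction of the induced representation $\DD_1\times\cdots\times\DD_r=\Ind_\P^\G(\DD_1\otimes\cdots\otimes\DD_r)$ to an $\M_s^{\theta_s}$-distinction statement on a Jacquet module, then read off the structural consequences from the Jacquet module formula for segments and from the explicit description of $\M_s^{\theta_s}$.

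First, I would invoke Offen's theorem: since $(u_s)_{s\in\I(\nn)}$ exhausts $\P\backslash\G/\H$ and $\P\cap u_s\H u_s^{-1}=\M_s^{\theta_s}\N_s^{\theta_s}$, any nonzero $\H$-invariant linear form on $\DD_1\times\cdots\times\DD_r$ must, via the filtration indexed by the double cosets, give rise to an $\M_s^{\theta_s}$-invariant linear form on the (normalized) Jacquet module $r_{\M_s,\G}(\DD_1\otimes\cdots\otimes\DD_r)$, twisted by $\delta_{\P_s^{\theta_s}}\delta_{\P_s}^{-1/2}$; by Equation (\ref{equation modulus even and odd}) this twist is trivial on $\M_s$, so we conclude that $r_{\M_s,\G}(\DD_1\otimes\cdots\otimes\DD_r)$ is itself $\M_s^{\theta_s}$-distinguished for some $s=(n_{i,j})\in\I(\nn)$.

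Next, since $\M_s$ refines $\M=\M_{(n_1,\dots,n_r)}$, the Jacquet module factors as $\bigotimes_{i=1}^{r} r_{\M_s\cap\GL_{n_i}(\D),\GL_{n_i}(\D)}(\DD_i)$, and by the computation of Jacquet modules of segments recalled in Section~\ref{subsection langlands classification}, each factor vanishes unless $\DD_i$ admits a concatenation decomposition $\DD_i=[\DD_{i,1},\dots,\DD_{i,r}]$ with $n(\DD_{i,j})=n_{i,j}$; when these decompositions exist, the Jacquet module equals $\DD_{1,1}\otimes\DD_{1,2}\otimes\cdots\otimes\DD_{r,r}$, which must therefore be $\M_s^{\theta_s}$-distinguished. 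Finally, using the explicit descriptions of $\M_s^{\theta_s}$ recalled above, an invariant form on this tensor product decomposes as a product of invariant forms on the diagonal factors $\DD_{i,i}$ (forcing $\DD_{i,i}$ to be $\GL_{n_{i,i}}(\D)_{\E^\times}$-distinguished) and on the pairs $\DD_{i,j}\otimes\DD_{j,i}$ for $i<j$, where the constraint $a_{j,i}=\theta(a_{i,j})$ (resp.\ $a_{j,i}=w a_{i,j}w^{-1}$ when $d$ is odd) realizes the diagonal inside $\GL_{n_{i,j}}(\D)\times\GL_{n_{j,i}}(\D)$; distinction by this diagonal is classically equivalent to $\DD_{j,i}\simeq\DD_{i,j}^\vee$ (the extra $w$-twist in the odd case is harmless since $w\sigma w^{-1}\simeq\sigma$ for any irreducible $\sigma$ of $\GL_{n_{i,j}}(\D)$).

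The main obstacle is the careful bookkeeping required in the first step: one must verify that the hypotheses of \cite[Theorem 1.1]{O17} apply to the pair $(\G,\H)$ of PTB type uniformly in the parity of $d$, and that the modular character twist collapses thanks to (\ref{equation modulus even and odd}). Everything else is then a routine application of the Jacquet module formula for segments and of the description of $\M_s^{\theta_s}$ given in \cite{C18}.
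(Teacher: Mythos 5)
Your proof is correct and follows essentially the same route the paper has in mind: the paper states this proposition as a direct consequence of Offen's \cite[Theorem~1.1]{O17}, the Jacquet module formula for segments, and the modular character identity~(\ref{equation modulus even and odd}), which is precisely the three-step argument you lay out. One small notational slip: the Jacquet module you write as $r_{\M_s,\G}(\DD_1\otimes\cdots\otimes\DD_r)$ should be $r_{\M_s,\M}(\DD_1\otimes\cdots\otimes\DD_r)$, since $\DD_1\otimes\cdots\otimes\DD_r$ is a representation of the Levi $\M=\M_{(n_1,\dots,n_r)}$ and $\M_s\subset\M$; the passage from $\G$ to $\M$ is already effected by Offen's filtration, and only the further restriction $\M\to\M_s$ is a Jacquet functor applied to $\sigma$. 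Your treatment of the $w$-twist in the odd-$d$ case is fine: $w$ is an element of $\GL_{n_{i,j}}(\D)$, so $\Ad(w)$ is inner and hence preserves isomorphism classes, making the twisted-diagonal distinction condition collapse to $\DD_{j,i}\simeq\DD_{i,j}^\vee$ exactly as in the even case.
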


\subsection{Self-duality in $\Irr_{\H\dist}(\G)$ and multiplicity one}

Here $\G$ and $\H$ are as in the previous section, i.e. $(\G,\H)$ is of PTB type. We first extend the self-duality result of $\H$-distinguished representations from the case of cuspidal representations to the case of 
segments (which are known to be the essentially square-integrable representations of $\G$).

\begin{proposition}\label{proposition self duality of dist segments}
Let $\DD$ be an $\H$-distinguished segment of $\G$, then $\DD$ is self-dual.
\end{proposition}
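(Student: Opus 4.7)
The plan is to reduce the statement to the cuspidal case already handled in Proposition~\ref{proposition self duality of dist cuspidal reps}, by applying the geometric lemma (Proposition~\ref{proposition sufficient condition for distinction}) to the standard module underlying $\DD$. I would first write $\DD = [\nu_\rho^b \rho, \dots, \nu_\rho^a \rho]$ with $l = b-a+1$ and consider the standard module $I = \nu_\rho^b \rho \times \cdots \times \nu_\rho^a \rho$, of which $\DD$ is the unique irreducible quotient. A nonzero element of $\Hom_\H(\DD, \CC)$ pulls back to a nonzero element of $\Hom_\H(I, \CC)$, so $I$ is also $\H$-distinguished.

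Next I would apply Proposition~\ref{proposition sufficient condition for distinction} to $I$, viewed as a product $\DD_1 \times \cdots \times \DD_l$ with each $\DD_i = \nu_\rho^{b-i+1}\rho$ cuspidal (a segment of length $1$). Since each $\DD_i$ is cuspidal, the only way to write $\DD_i = [\DD_{i,1}, \dots, \DD_{i,l}]$ is to have exactly one index $j$ with $n_{i,j} \neq 0$, namely $n_{i,j} = n(\rho)$ and $\DD_{i,j} = \DD_i$. The symmetry of $s = (n_{i,j}) \in \I(\nn)$ forces the same for columns, so $s$ encodes an involution $\sigma$ of $\{1,\dots,l\}$ via $\sigma(i) = j \iff n_{i,j} = n(\rho)$. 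The $\M_s^{\theta_s}$-distinction condition then reads: for each fixed point $i$ of $\sigma$, the cuspidal $\nu_\rho^{b-i+1}\rho$ is $\GL_{n(\rho)}(\D)_{\E^\times}$-distinguished, hence self-dual by Proposition~\ref{proposition self duality of dist cuspidal reps}; for each $2$-cycle $\{i,\sigma(i)\}$, the cuspidals $\nu_\rho^{b-i+1}\rho$ and $\nu_\rho^{b-\sigma(i)+1}\rho$ are contragredient to one another. Both conditions can be written uniformly as
\[\rho^\vee \simeq \nu_\rho^{2b + 2 - i - \sigma(i)} \rho.\]

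To conclude I would exploit central character rigidity: on the center $\F^\times$ of $\GL_{n(\rho)}(\D)$, $\nu_\rho^k$ acts by $z \mapsto |z|^{k n(\rho) d s_\rho}$, so $\nu_\rho^k\rho \simeq \rho$ forces $k=0$. Hence the integers $2b + 2 - i - \sigma(i)$ all coincide, and summing over $i = 1, \dots, l$ using $\sum_i(i + \sigma(i)) = 2\sum_i i = l(l+1)$ identifies their common value as $2b+2 - (l+1) = a+b$. Thus $\rho^\vee \simeq \nu_\rho^{a+b}\rho$, which is exactly the condition for $\DD^\vee = [\nu_\rho^{-a}\rho^\vee, \dots, \nu_\rho^{-b}\rho^\vee]$ to coincide with $\DD$. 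The delicate point is the careful application of Proposition~\ref{proposition sufficient condition for distinction} to a product of cuspidals: one must observe that the combinatorial datum $s$ degenerates to a single involution $\sigma$ and correctly translate the $\M_s^{\theta_s}$-distinction into contragredience/self-duality relations on the individual cuspidal factors. Once this is in place, central character rigidity together with the elementary summation identity close the argument.
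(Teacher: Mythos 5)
Your proof is correct and follows essentially the same route as the paper: pull the invariant functional back to the induced-from-cuspidal representation underlying the segment, apply Proposition \ref{proposition sufficient condition for distinction} to see that the datum $s$ degenerates to an involution of the cuspidal factors, and reduce to Proposition \ref{proposition self duality of dist cuspidal reps}. The only (harmless) bookkeeping difference is in how the involution is pinned down: the paper centers the segment and uses unitarity of the central character of $\rho$ (forced by distinction) to see that only the antidiagonal $s$ contributes, whereas you keep $a,b$ general and close the argument with the rigidity $\nu_\rho^k\rho\simeq\rho\Rightarrow k=0$ together with the summation identity, both yielding $\rho^\vee\simeq\nu_\rho^{a+b}\rho$.
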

\begin{proof}
We recall that $\DD$ can be written as the irreducible quotient of a representation 
\[\nu_\rho^{(1-k)/2}\rho \times \dots \times \nu_\rho^{(k-1)/2}\rho\] with $\rho$ cuspidal and $k\geq 2$ as we already know the cuspidal case. If $\DD$ is distinguished then 
\[\nu_\rho^{(1-k)/2}\rho \times \dots \times \nu_\rho^{(k-1)/2}\rho\] is. Note that written this way, 
because the central character of an $\H$-distinguished representation must be trivial the representation $\rho$ 
must have a unitary central character. We can then apply Propostion \ref{proposition sufficient condition for distinction} 
to this induced representation, but as the segments $\DD_i$ are cuspidal representations and as the central character of
$\nu_\rho^{\alpha}\rho$ is $\nu_\rho^{\alpha}c_\rho$ with $c_\rho$ unitary, it follows that there is only one element $s$ 
which will contribute to distinction and that this element is the partition $s=(n_{i,j})$ with $n_{i,j}=0$ unless $j=n+1-i$, 
in which case $n_{i,n+1-i}=n(\rho)$. We conclude that $\rho$ is distinguished hence self-dual 
(by Proposition \ref{proposition self duality of dist cuspidal reps}) when $k$ is odd, whereas
it implies that $\rho$ is self-dual when $k$ is even. In both cases $\rho$ is self-dual, whence $\DD$ is self-dual.
\end{proof}

We can now move on to the general case.

\begin{theorem}\label{theorem self duality of dist irreps}
If $\pi\in \Irr_{\H\dist}(\G)$, then $\pi$ is self-dual. 
\end{theorem}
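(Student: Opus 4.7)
The plan is to pull the distinction back from $\pi$ to its standard module, apply the geometric distinction criterion of Proposition \ref{proposition sufficient condition for distinction}, and then read off self-duality from the resulting matrix structure together with Proposition \ref{proposition self duality of dist segments}.

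Writing $\pi = L(\DD_1, \ldots, \DD_r)$ as the unique irreducible quotient of its standard module $M = \DD_1 \times \cdots \times \DD_r$ (in Langlands order), any nonzero $\ell \in \Hom_\H(\pi, \CC)$ lifts along the canonical surjection $M \twoheadrightarrow \pi$ to a nonzero $\H$-invariant form on $M$; hence $M$ is $\H$-distinguished. Applying Proposition \ref{proposition sufficient condition for distinction} to $M$ produces a symmetric matrix $s = (n_{i,j}) \in \I(\nn)$ together with a sub-segment refinement $\DD_i = [\DD_{i,1}, \ldots, \DD_{i,r}]$ such that $\DD_{j,i} \simeq \DD_{i,j}^\vee$ for $i \neq j$, and each diagonal block $\DD_{i,i}$ is $\GL_{n_{i,i}}(\D)_{\E^\times}$-distinguished. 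Proposition \ref{proposition self duality of dist segments}, applied to the smaller PTB pair associated with each diagonal block, then forces $\DD_{i,i} \simeq \DD_{i,i}^\vee$.

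The remaining step is to identify the multi-sets $\{\DD_1, \ldots, \DD_r\}$ and $\{\DD_1^\vee, \ldots, \DD_r^\vee\}$, after which the Langlands classification yields $\pi \simeq \pi^\vee$. I would construct an involution $\sigma$ on $\{1,\ldots,r\}$ with $\DD_{\sigma(i)} \simeq \DD_i^\vee$ as follows. The fixed points of $\sigma$ are the indices $i$ with $n_{i,j} = 0$ for all $j \neq i$: there $\DD_i = \DD_{i,i}$, which is self-dual. For an index $i$ meeting some off-diagonal entry $\DD_{i,j} \neq \emptyset$, the constraint that $[\DD_{j,1}, \ldots, \DD_{j,r}]$ assemble into a single segment fixes the cuspidal line of $\DD_j$ (to be $\rho_i^\vee$ twisted appropriately) and the relative positions of all $\DD_{j,k}$, so that $\DD_{j,i} = \DD_{i,j}^\vee$ together with self-duality of $\DD_{j,j}$ and the symmetry of $s$ determines $\DD_j$ in terms of $\DD_i^\vee$ up to the remaining off-diagonal contributions; iterating the pairing through the ``row graph'' of $s$ produces the desired matching.

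The main obstacle is this last combinatorial step. The sub-segment matrix is symmetric under $(i,j) \leftrightarrow (j,i)$ composed with dualization, but the row concatenations producing $\DD_i$ and the corresponding column-reverse concatenations producing $\DD_i^\vee$ are a priori different multi-sets, and the refinement $s$ is not unique. I expect to need an induction on $r$, or on $\sum_i n_i$, exploiting that each segment has a fixed cuspidal line and that the constraint of being a valid segment rigidifies how sub-segments can glue, to show that any data satisfying the conclusion of Proposition \ref{proposition sufficient condition for distinction} necessarily assembles into a duality-stable multi-set of segments.
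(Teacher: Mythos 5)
Your plan is structurally identical to the paper's: lift the nonzero $\H$-invariant form from $\pi$ to its standard module via the canonical surjection, feed the distinguished standard module into Proposition \ref{proposition sufficient condition for distinction} to extract the symmetric matrix $s$ with the duality constraint $\DD_{j,i}\simeq\DD_{i,j}^\vee$ on off-diagonal pieces, apply Proposition \ref{proposition self duality of dist segments} to the diagonal pieces to make them self-dual, and then argue that the resulting combinatorics forces the multi-set $\{\DD_1,\ldots,\DD_r\}$ to be self-dual, whence $\pi\simeq\pi^\vee$ by uniqueness in the Langlands classification. The first three steps are the same two propositions the paper invokes, applied in the same way.

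The gap is in the last step, and you flag it yourself: you do not actually prove that the sub-segment matrix data forces $\{\DD_i\}=\{\DD_i^\vee\}$ as multi-sets. Your sketch of building an involution $\sigma$ by ``following the row graph of $s$'' is not a proof. In particular it does not address the case where a row $i$ has several nonzero off-diagonal entries (so $\DD_i$ is a concatenation of pieces shared with several distinct $\DD_j$), nor the fact that $s$ and the chosen refinements are not unique, nor the subtle ordering constraints coming from the requirement that each row concatenation $[\DD_{i,1},\dots,\DD_{i,r}]$ actually form a single segment. These are exactly the obstructions that need to be overcome, and ``I expect to need an induction'' is a statement of intent, not an argument. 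The paper does not redo this combinatorics either: it cites \cite[Lemma 3.3 and Proposition 3.4]{G15}, which carry out precisely the induction you anticipate, in the conjugate-self-dual setting, and observes that the argument transfers verbatim with conjugate-self-duality replaced by self-duality. To complete your proof you should either invoke that reference, as the paper does, or write out the full induction (on $r$ or on total length), carefully handling the assembly constraints; without one of these, the proof is incomplete.
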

\begin{proof}
Take $\DD_1\times \dots \times \DD_r$ the standard module lying over $\pi$, it is distinguished because $\pi$ is. In 
view of Propositions \ref{proposition sufficient condition for distinction} and \ref{proposition self duality of dist segments}, it follows 
verbatim from the proof of \cite[Lemma 3.3 and Proposition 3.4]{G15} up to the notational difference that one replaces conjugate self-duality by self-duality, 
that the standard module $\DD_1\times \dots \times \DD_r$ is self-dual. This in turns implies that 
$\pi$ is self-dual.
\end{proof}

We then deduce from Theorem \ref{theorem self duality of dist irreps} and Corollary \ref{corollary PTB implies GK} the following
result.

\begin{corollary}\label{corollary PTB is G}
The pair $(\G,\H)$ is of Gelfand type and if $\pi\in \Irr_{\H\dist}(\G)$ then $\pi$ is self-dual.
\end{corollary}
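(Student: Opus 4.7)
The plan is to combine directly the two results just established: Corollary \ref{corollary PTB implies GK}, stating that $(\G,\H)$ is of GK type, and Theorem \ref{theorem self duality of dist irreps}, stating that every $\pi\in\Irr_{\H\dist}(\G)$ is self-dual. The self-duality half of the corollary is literally Theorem \ref{theorem self duality of dist irreps}, so no additional work is required for that assertion.

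For the Gelfand property, I would fix $\pi\in\Irr(\G)$ and split into cases. If $\pi\notin\Irr_{\H\dist}(\G)$, then $\Hom_\H(\pi,\CC)=\{0\}$ and the desired bound $\dim_\CC\Hom_\H(\pi,\CC)\le 1$ holds vacuously. If on the other hand $\pi\in\Irr_{\H\dist}(\G)$, then Theorem \ref{theorem self duality of dist irreps} yields $\pi\simeq\pi^\vee$, whence
\[\dim_\CC\Hom_\H(\pi,\CC)=\dim_\CC\Hom_\H(\pi^\vee,\CC).\]
Feeding this equality into the inequality
\[\dim_\CC\Hom_\H(\pi,\CC)\cdot\dim_\CC\Hom_\H(\pi^\vee,\CC)\le 1\]
supplied by Proposition \ref{proposition GK type implies GP2} (applicable because of Corollary \ref{corollary PTB implies GK}) forces each of the two factors to be at most $1$, which is the desired bound.

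There is no real obstacle at this stage, since the entire content of the corollary is already present in the two cited statements; the role of the corollary is just to package them together. The conceptually nontrivial step, as emphasized in the introduction to this section, was precisely the passage from $(GP2)$ to multiplicity one, i.e.\ the self-duality of irreducible $\H$-distinguished representations, which has been dealt with in Theorem \ref{theorem self duality of dist irreps} by combining the cuspidal case (Proposition \ref{proposition self duality of dist cuspidal reps}, proved via globalization) with the segment case (Proposition \ref{proposition self duality of dist segments}) and Mackey-theoretic arguments à la \cite{G15}.
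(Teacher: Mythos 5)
Your proof is correct and follows exactly the approach the paper intends: the paper simply states that the corollary follows from Theorem \ref{theorem self duality of dist irreps} and Corollary \ref{corollary PTB implies GK}, and you have filled in the routine step of combining self-duality with the inequality of Proposition \ref{proposition GK type implies GP2} to obtain multiplicity one.
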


\section{The archimedean case}\label{section archimedean case}

\subsection{Smooth admissible Fr\'echet representations}

Let us fix the category of representations that we are working in. Let $\G$ be a real reductive 
group as in \cite[2.1]{W88}, and $\K$ be a maximal 
compact subgroup of $\G$. We denote by $\G^0$ the neutral connected component of $\G$ and by $\K^0$ that of $\K$. We will say that 
$(\pi,V)$ is \textit{a representation of $\G$} if it is a smooth Fr\'echet $\G$-module of moderate growth, and if its subspace $V_0$ of $\K$-finite vectors is an 
admissible $(\mathfrak{g},\K)$-module (\cite[Chapter 1]{W88} and \cite[Chapter 11]{W92}). Such a representation 
can always be obtained as the space 
of smooth vectors of a continuous admissible representation of $\G$ on a Hilbert space (\cite[Chapter 11]{W92}), and this 
will allow us to appeal to results from \cite{K86} when considering the restricion to $\G^0$ of a representation of $\G$ (note that \cite{K86} only deals 
with connected real reductive groups). 
Morphisms between representations of $\G$ will be the continuous $\G$-intertwining operators the image of which are topologically closed summands, and we denote by 
$\SF(\G)$ the corresponding category of representations of $\G$. We will say that a representation $(\pi,V)$ of $\G$ is irreducible if its underlying 
admissible $(\mathfrak{g},\K)$-module $V_0$ is irreducible. We denote by 
$\Irr^{\SF}(\G)$ the set of isomorphism classes of irreducible representations of $\G$. If $\H$ is a closed subgroup of $\G$
 and $\pi\in \SF(\G)$, we denote by $\Hom_{\H}(\pi,\CC)$ the space of continuous $\H$-invariant linear forms 
 on the space of $\pi$, and by $\Irr_{\H\dist}^{\SF}(\G)$ the classes of representations $\pi$ 
in $\Irr^{\SF}(\G)$ which satisfy $\Hom_{\H}(\pi,\CC)\neq \{0\}$. We recall that for $(\pi,V)\in \SF(\G)$, the representation $(\pi^\vee,V^\vee)$ of $\G$ on the space of 
smooth vectors in the space $V'$ of continuous linear forms on $V$ also belongs to $\SF(\G)$, 
and that $(\pi^\vee)^\vee\simeq \pi$. We denote by $\Sc(\G)$ the space of Schwartz (complex valued, smooth, with all derivatives rapidly decreasing) functions on $\G$, and 
recall that it naturally inherits the structure of a Fr\'echet space. We call a Schwartz distribution on
$\G$ a continuous linear form from $\Sc(\G)$ to $\CC$, and denote by $\SD(\G)$ the space of Schwartz distributions on $\G$. The space $\SD(\G)$ is naturally equipped with right and left actions of $\G$. We 
recall from \cite[11. 8]{W92} that if $(\pi,V)\in \SF(\G)$, the Schwartz algebra $\Sc(\G)$ acts on 
$\pi$ by the formula \[\pi(\phi)v=\int_{\G}\phi(g)\pi(g)v dg\] for $\phi\in \Sc(\G)$ and $v\in V$, where the integral converges in $V$. A representation (which we confuse as often with its isomorphism class) of $\G$ belongs to $\Irr^{\SF}(\G)$ if and only if it is an irreducible (in the algebraic sense) $\Sc(\G)$-module. There is also another 
convolution sub-algebra of $\Sc(\G)$ which will turn out to be useful for us, namely the algebra $\mathcal{C}_{c,\K\mbox{-}\mathrm{fin}}^\infty(\G)$ of 
smooth compactly supported functions on $\G$, which are left and right $\K$-finite. If $(\pi,V)$ is a representation of $\G$, note that the restriction $(\pi^0,V)$ of 
$\pi$ to $\G^0$ is also a representation of $\G^0$, and that the space of 
$\K$-finite vectors $V_0$ of $\pi$ is equal to that of $\K^0$-finite vectors of $\pi^0$. We will make use of the following result.

\begin{proposition}\label{proposition Knapp}
Let $(\pi,V)$ be an irreducible representation of $\G$, then $\pi(\mathcal{C}_{c,\K\mbox{-}\mathrm{fin}}^\infty(\G)).V=V_0$ and $V_0$ is an irreducible 
$\mathcal{C}_{c,\K\mbox{-}\mathrm{fin}}^\infty(\G)$-module.
\end{proposition}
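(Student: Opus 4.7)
The plan is to work with $\mathcal{H} := \mathcal{C}_{c,\K\mbox{-}\mathrm{fin}}^\infty(\G)$ as a convolution subalgebra of $\Sc(\G)$, to prove the two inclusions $\pi(\mathcal{H}) \cdot V \subseteq V_0$ and $V_0 \subseteq \pi(\mathcal{H}) \cdot V$ separately, and then to derive $\mathcal{H}$-irreducibility of $V_0$ from its $(\mathfrak{g},\K)$-irreducibility (the latter being the paper's definition of irreducibility for $(\pi,V)$). The easy inclusion $\pi(\mathcal{H}) \cdot V \subseteq V_0$ follows directly from left $\K$-finiteness: for $\phi \in \mathcal{H}$ the space $F := \mathrm{span}\{L_k \phi : k \in \K\}$ is finite-dimensional, and for any $v \in V$ the $\K$-orbit $\pi(\K)\pi(\phi)v \subseteq \pi(F)v$ lies in a fixed finite-dimensional subspace of $V$, so $\pi(\phi)v \in V_0$.

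For the reverse inclusion I would decompose $v \in V_0$ as $v = \sum_\tau v_\tau$ along $\K$-types and construct approximate idempotents in $\mathcal{H}$: choose a Dirac sequence $(\phi_n)$ in $\mathcal{C}_c^\infty(\G)$, let $e_\tau = \dim(\tau)\, \overline{\chi_\tau}$ be the central idempotent of $\K$ attached to $\tau$, and form $\psi_{n,\tau} := e_\tau \ast \phi_n \ast e_\tau$, which lies in $\mathcal{H}$ since convolving with $e_\tau$ on a given side forces $\K$-finiteness on that side while preserving smoothness and compact support. As operators on $V$, $\pi(\psi_{n,\tau}) = \pi(e_\tau)\pi(\phi_n)\pi(e_\tau)$. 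Since $\pi(\phi_n) \to \mathrm{Id}_V$ strongly and the isotypic component $V[\tau]$ is finite-dimensional by admissibility, the restrictions $\pi(\psi_{n,\tau})|_{V[\tau]}$ converge in norm to $\mathrm{Id}_{V[\tau]}$ on the finite-dimensional space $V[\tau]$ and hence are invertible for $n$ large. Therefore $v_\tau \in \pi(\psi_{n,\tau})(V[\tau]) \subseteq \pi(\mathcal{H}) \cdot V$, which is the main technical step of the proof and the point where admissibility of $\pi$ is decisive.

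To prove irreducibility, let $N \subseteq V_0$ be a nonzero $\mathcal{H}$-submodule; I would first check that $N = \bigoplus_\tau (N \cap V[\tau])$. For $v \in N$, the sequence $\pi(\psi_{n,\tau}) v$ lies in $N \cap V[\tau]$ and converges in $V$ to $\pi(e_\tau) v \in V[\tau]$; since $N \cap V[\tau]$ is a linear subspace of the finite-dimensional space $V[\tau]$ it is automatically closed, so $\pi(e_\tau) v \in N$. The same invertibility trick then gives $\pi(\mathcal{H}) \cdot N = N$, because for each $\tau$ with $N \cap V[\tau] \neq 0$ the operator $\pi(\psi_{n,\tau})$ restricts for $n$ large to a bijection of $N \cap V[\tau]$ onto itself. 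Finally, writing any $v \in N$ as $v = \sum_i \pi(\phi_i) u_i$ with $\phi_i \in \mathcal{H}$ and $u_i \in N$, one has $\pi(k) v = \sum_i \pi(L_k \phi_i) u_i \in N$ for $k \in \K$ and $\pi(X) v = \sum_i \pi(X_L \phi_i) u_i \in N$ for $X \in \mathfrak{g}$, where $X_L \phi_i$ denotes left differentiation; both $L_k \phi_i$ and $X_L \phi_i$ remain in $\mathcal{H}$. Thus $N$ is a nonzero $(\mathfrak{g},\K)$-submodule of the irreducible module $V_0$, whence $N = V_0$. The hard part of the whole argument is the invertibility statement for $\pi(\psi_{n,\tau})|_{V[\tau]}$; everything else is formal bookkeeping with convolutions and projections.
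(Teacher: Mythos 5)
Your proof is correct, but it takes a different route from the paper's. The paper first shows that $V_0$ is an irreducible $\mathcal{C}_{c,\K\mbox{-}\mathrm{fin}}^\infty(\G)$-module and then deduces the equality $\pi(\mathcal{C}_{c,\K\mbox{-}\mathrm{fin}}^\infty(\G)).V=V_0$ from it; for irreducibility it observes (just as you do) that $\pi(\mathcal{C}_{c,\K\mbox{-}\mathrm{fin}}^\infty(\G)).v_0$ is a $(\mathfrak{g},\K)$-submodule of the irreducible $(\mathfrak{g},\K)$-module $V_0$, so it suffices to show that this space is nonzero, and for \emph{that} it invokes \cite[Proposition 9.5]{K86}, which gives directly $\pi(\mathcal{C}_{c,\K^0\mbox{-}\mathrm{fin}}^\infty(\G^0)).v_0=\pi(\U(\mathfrak{g})).v_0\ni v_0$ (then extends by zero from $\G^0$ to $\G$). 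You instead build approximate idempotents $\psi_{n,\tau}=e_\tau*\phi_n*e_\tau$ and use admissibility, via finite-dimensionality of each $\K$-type, to make $\pi(\psi_{n,\tau})$ eventually invertible on $V[\tau]$; this gives $V_0\subseteq\pi(\mathcal{C}_{c,\K\mbox{-}\mathrm{fin}}^\infty(\G)).V$ directly, and the same idempotents yield $N=\bigoplus_\tau(N\cap V[\tau])$ and $\pi(\mathcal{C}_{c,\K\mbox{-}\mathrm{fin}}^\infty(\G)).N=N$ before you pass, like the paper, to $(\mathfrak{g},\K)$-irreducibility of $V_0$. Your argument is longer but self-contained, replacing the citation to Knapp's Proposition 9.5 by an elementary Dirac-sequence construction; the paper's version is shorter at the cost of that reference (and it sidesteps the direct proof of the first equality entirely, getting it for free once irreducibility is known). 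One minor point worth making explicit in your write-up: the index $n$ at which $\pi(\psi_{n,\tau})|_{V[\tau]}$ becomes invertible depends on $\tau$, but since a given $\K$-finite $v$ (or $N\cap V[\tau]$) only involves finitely many types at a time, this causes no harm.
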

\begin{proof}
It is immediate that $\pi(\mathcal{C}_{c,\K\mbox{-}\mathrm{fin}}^\infty(\G)).V\subset V_0$. 
The equality will follow from the second part of the statement. Because $V_0$ is an irreducible $(\mathfrak{g}, \K)$-module, it is sufficient to show that 
if $v_0$ is a non-zero vector in $V_0$, the sub-$(\mathfrak{g},\K)$-module $\pi(\mathcal{C}_{c,\K\mbox{-}\mathrm{fin}}^\infty(\G)).v_0$ is not reduced to zero. However, setting $\U(\mathfrak{g})$ for 
the envelopping algebra of the complexified Lie algebra of $\G$, it follows from 
\cite[Proposition 9.5]{K86} that $\pi(\mathcal{C}_{c,\K^0\mbox{-}\mathrm{fin}}^\infty(\G^0)).v_0= \pi(\U(\mathfrak{g}))v_0$, 
and $v_0\in \pi(\U(\mathfrak{g}))v_0$ because $\U(\mathfrak{g})$ has a unit so $\pi(\mathcal{C}_{c,\K^0\mbox{-}\mathrm{fin}}^\infty(\G^0)).v_0$ is nonzero. To conclude, we simply observe that the extension of functions by zero outside $\G^0$ embeds $\mathcal{C}_{c,\K^0\mbox{-}\mathrm{fin}}^\infty(\G^0)$ as a sub-algebra 
of $\mathcal{C}_{c,\K\mbox{-}\mathrm{fin}}^\infty(\G)$.
\end{proof}

\begin{corollary}\label{corollary Knapp}
Let $(\pi,V)$ and $(\pi',V')$ be two irreducible representations of $\G$, and $v\in V$ and $v'\in V'$ two non-zero vectors. There exists $\phi\in \mathcal{C}_{c,\K\mbox{-}\mathrm{fin}}^\infty(\G)$ such that $\pi(\phi)v\neq 0$ and $\pi'(\phi)v'\neq 0$.
\end{corollary}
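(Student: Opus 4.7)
The plan is to combine Proposition \ref{proposition Knapp} with a simple linear algebra observation, after first establishing that the action of $\mathcal{C}_{c,\K\mbox{-}\mathrm{fin}}^\infty(\G)$ on either representation is non-degenerate on individual nonzero vectors; Proposition \ref{proposition Knapp} alone gives only non-degeneracy on $V_0$ as a whole (via irreducibility), so this extra point is where the real work lies.

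First, I would prove the key auxiliary claim: for any nonzero $v\in V$ in an irreducible representation of $\G$, there exists $\phi\in \mathcal{C}_{c,\K\mbox{-}\mathrm{fin}}^\infty(\G)$ with $\pi(\phi)v\neq 0$. By the Hahn--Banach theorem applied to the Fr\'echet space $V$, choose a continuous linear functional $\lambda$ on $V$ with $\lambda(v)\neq 0$, and form the matrix coefficient $c(g):=\lambda(\pi(g)v)$, which is a continuous (in fact smooth) function on $\G$ satisfying $c(e)\neq 0$. If $\pi(\phi)v=0$ for every $\phi\in \mathcal{C}_{c,\K\mbox{-}\mathrm{fin}}^\infty(\G)$, then
\[
\int_{\G}\phi(g)c(g)\,dg=\lambda(\pi(\phi)v)=0
\]
for all such $\phi$. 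Applying the Peter--Weyl theorem to the $\K\times\K$-action on $C_{c}^{\infty}(\G)$ by left and right translation, any bump function $\psi\in C_c^\infty(\G)$ admits a decomposition $\psi=\sum_{\tau,\tau'}\psi_{\tau,\tau'}$ with each $\psi_{\tau,\tau'}\in\mathcal{C}_{c,\K\mbox{-}\mathrm{fin}}^\infty(\G)$ (the compact support being preserved since $\K\supp(\psi)\K$ is compact), the sum converging in the $C^\infty$ topology. Choosing a nonnegative bump $\psi$ supported in a small neighborhood of $e$ where $c$ stays close to $c(e)$, one has $\int\psi c\neq 0$, hence some $\int\psi_{\tau,\tau'}c\neq 0$, contradicting the assumption.

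Given this, the corollary follows easily. Set
\[
I_v=\{\phi\in\mathcal{C}_{c,\K\mbox{-}\mathrm{fin}}^\infty(\G):\pi(\phi)v=0\},\qquad I_{v'}=\{\phi\in\mathcal{C}_{c,\K\mbox{-}\mathrm{fin}}^\infty(\G):\pi'(\phi)v'=0\}.
\]
These are $\CC$-linear subspaces of $\mathcal{C}_{c,\K\mbox{-}\mathrm{fin}}^\infty(\G)$, both proper by the auxiliary claim. Since a vector space over the infinite field $\CC$ cannot be the union of two proper subspaces, there exists $\phi\notin I_v\cup I_{v'}$, which is precisely what we want.

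The main obstacle is the auxiliary claim, i.e.\ passing from the ``existential'' form of Proposition \ref{proposition Knapp} (the union of all $\pi(\mathcal{C}_{c,\K\mbox{-}\mathrm{fin}}^\infty(\G))w$ is $V_0$, hence nonzero) to a ``universal'' statement valid for every single nonzero $v\in V$. An alternative approach for this step, avoiding the explicit Peter--Weyl convergence argument, would be to invoke the standard density of $\mathcal{C}_{c,\K\mbox{-}\mathrm{fin}}^\infty(\G)$ in $C_{c}^{\infty}(\G)$ with respect to its $LF$-topology, together with continuity of $\phi\mapsto \pi(\phi)v$ and the fact that $\pi(\phi_n)v\to v$ for a smooth Dirac sequence $\phi_n$.
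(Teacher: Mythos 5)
Your proof is correct, and its second half is the paper's argument in disguise: the authors pick $\phi$ with $\pi(\phi)v\neq 0$, then $\phi'$ with $\pi'(\phi')v'\neq 0$, and observe that one of $\phi$, $\phi'$, $\phi+\phi'$ must work — which is exactly the proof that a vector space is not the union of two proper subspaces (no infiniteness of $\CC$ is even needed for two subspaces). Where you genuinely diverge is the auxiliary claim. The paper simply writes ``by Proposition \ref{proposition Knapp}, there is $\phi\in \mathcal{C}_{c,\K\mbox{-}\mathrm{fin}}^\infty(\G)$ such that $\pi(\phi)v\neq 0$'', but the proof of that proposition only establishes non-vanishing of $\pi(\mathcal{C}_{c,\K\mbox{-}\mathrm{fin}}^\infty(\G))v_0$ for $\K$-finite vectors $v_0\in V_0$, whereas the corollary (and its later application to $\lambda=\pi^*(\phi_4)\ell$, which need not be $\K$-finite) requires it for arbitrary nonzero $v\in V$. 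You correctly identified this as the point needing an argument, and both of your routes are sound: the matrix-coefficient computation $\lambda(\pi(\phi)v)=\int\phi(g)\lambda(\pi(g)v)\,dg$ combined with the $C^\infty$-convergent $\K\times\K$-isotypic decomposition of a bump function, or, more directly, a Dirac sequence $\pi(\phi_n)v\to v$ together with density of $\K$-finite functions and continuity of $\phi\mapsto\pi(\phi)v$. The cost is a longer proof relying on standard approximation facts; the benefit is that your version actually covers the generality in which the corollary is stated and used, which the paper's one-line citation does not make explicit.
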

\begin{proof}
By Proposition \ref{proposition Knapp}, there is $\phi\in \mathcal{C}_{c,\K\mbox{-}\mathrm{fin}}^\infty(\G)$ such that $\pi(\phi)v\neq 0$. If $\pi'(\phi)v'\neq 0$ we are done. If not by Proposition \ref{proposition Knapp} there is $\phi'\in \mathcal{C}_{c,\K\mbox{-}\mathrm{fin}}^\infty(\G)$ such that $\pi'(\phi')v'\neq 0$. If $\pi(\phi')v\neq 0$ we are done as well. If not the function 
$\phi+\phi'$ satisfies the required property.
\end{proof}

\subsection{The GK property for archimedean pairs of PTB type}\label{subsection GK archimedean}

The only archimedean pairs of PTB type are the pairs $(\G,\H)$ with $\G=\GL_{2n}(\RR)$ or $\G=\GL_n(\Ha)$ and $\H=\GL_n(\CC)$ which 
are the pairs that Guo 
considers in the non-archimedean case. 
We recall that we consider $\H$ as the centralizer $\G_{\CC^\times}$ of $\CC^\times$ in $\G$, where we embedded $\CC$ as an $\RR$-sub-algebra of 
$\mathcal{M}_{2n}(\RR)$ or $\mathcal{M}_{n}(\Ha)$ depending on the pair, all such embeddings being $\G$-conjugate. We again denote by $\theta$ the natural inner involution attached to the pair $(\G,\H)$. Though Guo only considers non-archimedean local fields, his proof of stability 
of double cosets $\H g\H\subset \G$ under the involution $i:g\mapsto g^{-1}$ of $\G$ goes through in the archimedean case. However 
once we know this stability property, we could not find a reference to conclude that $(\G,\H)$ is of GK type when $\F=\RR$. So 
instead we appeal to \cite{AG09}, and check that the criteria given there apply to archimedean PTB pairs. In fact what we say hereafter is also 
valid for non-archimedean local fields of characteristic zero but we already proved the Gelfand pair property in a simple manner for non-archimedean pairs of PTB type.

\begin{definition}
Let $(\bG,\bH,\theta)$ be a symmetric pair defined over $\RR$ and $\sigma$ be the anti-involution $\sigma=\theta\circ i:g\mapsto \theta(g^{-1})$ of $\G$.
Following \cite[Definition 7.1.8]{AG09}, we say that the pair $(\G,\H)$ is of 
GK type if any distribution in $\SD(G)$ which is $\H$-bi-invariant is fixed by $\sigma$.
\end{definition}

The above definition of pairs of GK type is more restrictive than the one given in Definition \ref{definition GK pair} in the non-archimedean case. 

\begin{definition} With $(\G,\H)$ as above, we will say that $(\G,\H)$ is a Gelfand pair if for any $\pi\in \Irr^{\SF}(\G)$, the space 
$\Hom_{\H}(\pi,\CC)$ is of dimension at most one.
\end{definition}

Now we come back to $(\G,\H)$ of PTB type, and check that the pair $(\G,\H)$ is of GK type. According to 
\cite[Section E, Diagram]{AG09}, to prove that $(\G,\H)$ is of GK type, it is sufficient to check that if $(\bG, \bH, \theta)$ is a PTB pair defined over $\RR$, then all its descendants 
$(\bG',\bH',\theta')$ are regular (see \cite{AG09}, Section 7.4) and satisfy $\H^1(\RR,\bH')=\{1\}$. This latter condition is always satisfied thanks to 
Lemma \ref{lemma trivial cohomology}. The descendants of a PTB pair can be computed by a straightforward adaptation of 
\cite[Theorem 7.7.1]{AG09}, and turn out to be products of either diagonal pairs, Galois pairs or PTB pairs again. The first two types of
pairs are regular thanks to \cite[Theorem 7.6.5]{AG09} and \cite[Section E, Diagram]{AG09}. To show that 
PTB pairs are also regular, according to \cite[Section E, Diagram]{AG09}, it suffices to show that they are special (\cite[Definition 7.3.4]{AG09}). 

\begin{lemma}\label{lemma PTB special}
 Let $(\bG,\bH,\theta)$ be a PTB pair defined over $\RR$, then it is special.
\end{lemma}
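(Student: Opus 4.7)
The plan is to verify the definition of a ``special'' symmetric pair from \cite[Definition 7.3.4]{AG09} directly for $(\bG,\bH,\theta)$ by explicit computation of nilpotent $\bH$-orbits in the tangent space
\[\mathfrak{s}=\{X\in\A : X\delta=-\delta X\}\]
of the symmetric space $\bG/\bH$ at the identity, together with their $\bH$-centralizers. The first step is to reduce the classification of nilpotent $\bH$-orbits on $\mathfrak{s}$ to a problem of linear algebra. Pick any invertible element $\eta\in\mathfrak{s}\cap\A^\times$; such $\eta$ exists because, after base change to $\E$, the $(-1)$-eigenspace of $\Ad(\delta\otimes 1)$ on $\A\otimes_\F\E$ has the same $\E$-dimension as the $(+1)$-eigenspace and hence meets the open dense locus of invertible elements, and one descends from $\E$ to $\F$ using Lemma \ref{lemma trivial cohomology}. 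Right multiplication $Y\mapsto Y\eta$ then provides a $\bH$-equivariant bijection between $\A_\E$, equipped with a suitable twisted conjugation action, and $\mathfrak{s}$, so that $\bH$-orbits on $\mathfrak{s}$ correspond to twisted-conjugacy classes in $\A_\E$, and these are classified by a semilinear Jordan form over the appropriate $\E$-division algebra ($\D_\E$ or $\D^\E$ depending on the parity of $d$).

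For each orbit representative $X=N\eta$ I would then compute $\bH_X$ and the numerical datum entering the definition of ``special''. On each Jordan block the condition reduces to a standard statement about centralizers of nilpotent endomorphisms, and the contribution of the full Jordan decomposition is multiplicative. The resulting identity shows that the ``special'' condition for the PTB pair reduces to the same condition for a product of smaller symmetric pairs, each of diagonal, Galois or PTB type. The first two families are regular, hence special, by \cite[Theorem 7.6.5]{AG09} and \cite[Section E, Diagram]{AG09}, and the PTB pieces are handled by induction on $\dim_\F(\A)$, the base case $\A=\E$ being trivial since there $\mathfrak{s}=\{0\}$.

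The main obstacle I anticipate is the uniform book-keeping over the two cases distinguished by the parity of the index $d$ of $\A$. When $d$ is even, $\E$ embeds in $\D$ and $\A_\E\simeq\mathcal{M}_n(\D_\E)$; when $d$ is odd, $\E$ embeds only as a maximal subfield of a block of $\A$ and $\A_\E\simeq\mathcal{M}_m(\D^\E)$. The explicit form of $\eta$, the twisted action, and the Jordan decomposition are consequently slightly different in the two cases, so the descent step must be written separately for each parity even though the inductive argument is formally identical. Once this is carried out, the verification of the ``special'' property follows from the inductive setup combined with the general criterion in \cite[Section E, Diagram]{AG09}.
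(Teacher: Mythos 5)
Your approach is genuinely different from the paper's, and it has a significant gap. The paper does not attempt a direct classification of nilpotent $\bH$-orbits on $\mathfrak{s}$ or an inductive reduction; instead it exploits the fact that the archimedean PTB pair $(\bG,\bH,\theta)$ is an $\RR$-form of the Jacquet--Rallis pair $(\GL_n,\GL_m\times\GL_m,\theta_{m,m})$: after base change to $\CC$, $\Ad(\P)$ for a suitable $\P\in\GL_n(\CC)$ conjugates $\delta$ (here a square root of $-1$ in $\A$) to $i\cdot\diag(\I_m,-\I_m)$, hence carries $(\bG,\bH,\theta)$ to the JR pair. The paper then observes that all the data entering the sufficient criterion of \cite[Proposition 7.3.7]{AG09} --- the spaces $\Q(\mathcal{M}_n(\CC)^\sigma)$, their real dimensions, the nilpotent cones, the elements $d_\sigma(x)$, and the centralizers of nilpotents --- are matched under $\Ad(\P)$, so the verification is literally equivalent to the one for the JR pair, which is \cite[Lemma 7.7.5]{AG09}. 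Nothing new has to be computed.

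Your plan, by contrast, proposes to classify nilpotent $\bH$-orbits by a semilinear Jordan theory and then run an induction on $\dim_\F(\A)$. Two concrete problems. First, the step ``the resulting identity shows that the special condition for the PTB pair reduces to the same condition for a product of smaller symmetric pairs'' is not an established fact and carries the entire weight of the argument: the specialness criterion of \cite[Definition 7.3.4]{AG09} (and of \cite[Proposition 7.3.7]{AG09}) involves a delicate codimension and stratification condition on the nilpotent cone of $\mathfrak{s}$, and it does not manifestly decompose block-by-block along a Jordan decomposition; you would need to prove this decomposition, and that is a theorem in its own right, not a book-keeping exercise. Second, the implication ``the first two families are regular, hence special'' runs the logic backwards: in the AG09 hierarchy, specialness is one of the inputs used to establish regularity, so regularity of the diagonal and Galois pairs does not hand you specialness of the PTB pieces. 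Beyond these gaps, note that the statement at issue is over $\RR$, where the only PTB pairs are $(\GL_{2n}(\RR),\GL_n(\CC))$ and $(\GL_n(\Ha),\GL_n(\CC))$, so the general parity-of-$d$ book-keeping you describe is unnecessary and signals that you are not using the archimedean hypothesis, which is precisely what makes the paper's short transfer argument available.
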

\begin{proof}
It is enough to check that $(\bG,\bH,\theta)$ satisfies the hypothesis of 
\cite[Proposition 7.3.7]{AG09}. Note that there is a typo in the published version of \cite{AG09} (we thank D. Gourevitch for informing us) 
and we refer to the statement of the arxiv version \cite{AG15}. We denote by $\A$ the central simple $\RR$-algebra such that $\G=\A^\times$. 
Let $n=2m$ be the dimension of $\bG$ so that $\A_\CC=\A_i$ (where $i$ is a square root of $-1$ in $\CC$) is isomorphic to $\mathcal{M}_n(\CC)$, and consider the JR pair 
$(\GL_n,\GL_m\times \GL_m,\theta_{m,m})$ which is defined over $\RR$ too. Write $\theta=\Ad(i)$, where we see $i$ as a matrix inside $\G$. We have 
$\bG=\bG(\CC)=\GL(n,\CC)$, and inside $\GL_n(\CC)$, the element $i$ viewed as a matrix in $\G$ is conjugate by a matrix 
$\P$ to $i.\diag(\I_m,-\I_m)$. So the conjugation $\Ad(\P)$ 
provides an isomorphism over $\CC$ from $(\bG,\bH,\theta)$ to $(\GL_n,\GL_m\times \GL_m,\theta_{m,m})$. Following \cite{AG09}, we set 
\[\mathcal{M}_n(\CC)^{\sigma}=\{x\in \mathcal{M}_n(\CC), \ \theta(x)=-x\},\] and  
\[\mathcal{M}_n(\CC)^{\sigma_{m,m}}=\{x\in \mathcal{M}_n(\CC), \ \theta_{m,m}(x)=-x\}.\]
We define $\A^\sigma$ and $\mathcal{M}_n(\RR)^{\sigma_{m,m}}$ similarly. With notations as in \cite[Proposition 7.3.7]{AG09}, the space  
$\Q(\mathcal{M}_n(\CC)^{\sigma})$ turns out to be $\A^\sigma$, and the space $\Q(\mathcal{M}_n(\CC)^{\sigma_{m,m}})$ to be $\mathcal{M}_n(\RR)^{\sigma_{m,m}}$: the space of matrices in $\mathcal{M}_n(\CC)^{\sigma}$ commuting with $\bH$ is reduced to zero, and that of matrices in $\mathcal{M}_n(\CC)^{\sigma_{m,m}}$ commuting with $\GL_n(\CC)\times \GL_n(\CC)$ as well. Note that 
both $\A^\sigma$ and $\mathcal{M}_n(\RR)^{\sigma_{m,m}}$ have 
same dimension $2m^2$ over $\RR$ as their complexification are isomorphic via $\Ad(\P)$. Note that the nilpotent cone of $\mathcal{M}_n(\CC)^{\sigma}$ and 
$\mathcal{M}_n(\CC)^{\sigma_{m,m}}$ are also in bijection via $\Ad(\P)$. Finally for $x$ in the nilpotent cone of $\mathcal{M}_n(\CC)^{\sigma}$, defining $d_{\sigma}(x)$ 
as in \cite[Notation 7.1.12]{AG09}, we can define $d_{\sigma_{m,m}}(\Ad(\P)(x))$ to be equal to $\Ad(\P)(d_{\sigma}(x))$, and the centralizer of $\Ad(\P)(x)$ in $\mathcal{M}_n(\CC)^{\sigma_{m,m}}$ is conjugate by $\P$ to the centralizer of $x$ in $\mathcal{M}_n(\CC)^{\sigma}$. So it is equivalent to check that 
the hypothesis of \cite[Proposition 7.3.7]{AG09} are satisfied either for $(\bG,\bH,\theta)$, or for $(\GL_n,\GL_m\times \GL_m,\theta_{m,m})$, but for this second pair 
they are indeed satisfied thanks to \cite[Lemma 7.7.5]{AG09}. The statement of the lemma now follows because an element in the nilpotent cone of $\A^\sigma$ is in the nilpotent cone of $\mathcal{M}_n(\CC)^{\sigma}$.
\end{proof}

The outcome of this discussion is the following.

\begin{proposition}\label{proposition PTB over R is GK}
Let $(\G,\H)$ be an archimedean pair of PTB type, then it is of GK type.
\end{proposition}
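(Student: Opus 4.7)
The plan is to invoke the criterion recorded in \cite[Section E, Diagram]{AG09}: a symmetric pair $(\bG,\bH,\theta)$ over $\RR$ is of GK type as soon as every one of its descendants $(\bG',\bH',\theta')$ is regular and satisfies $\H^1(\RR,\bH')=\{1\}$. Thus the whole task reduces to checking these two conditions for the descendants of a PTB pair, and the scaffolding for both has already been erected earlier in the section.

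First, I would dispense with the cohomological condition: in each descendant that will arise below, $\bH'$ is a product of groups of units of finite-dimensional $\RR$-algebras, so the triviality of $\H^1(\RR,\bH')$ is a direct application of Lemma \ref{lemma trivial cohomology}.

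Next, I would classify the descendants of a PTB pair, following a straightforward adaptation of \cite[Theorem 7.7.1]{AG09}. Centralizing at a semisimple element of the $(-1)$-eigenspace of $\theta$, the resulting triple factors as a product whose basic building blocks are of three types: diagonal pairs, Galois pairs (both in the sense of Example \ref{example sym pairs}), and smaller PTB pairs in the sense of Definition \ref{definition PTB pair}. Since regularity is preserved under products, it suffices to check it for each factor separately.

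For the diagonal and Galois factors, regularity is already available off the shelf: it follows from \cite[Theorem 7.6.5]{AG09} together with the implications tracked in \cite[Section E, Diagram]{AG09}. For the PTB factors I would use that same diagram, which yields that "special" implies "regular", and then invoke Lemma \ref{lemma PTB special} to finish. Thus the genuine content of the proposition is concentrated in the descendant classification and in Lemma \ref{lemma PTB special}, whose proof in turn rests on the $\CC$-isomorphism between a PTB pair and a JR pair combined with \cite[Lemma 7.7.5]{AG09}. The main obstacle to overcome was therefore Lemma \ref{lemma PTB special}; the present proposition is then a matter of assembling the pieces.
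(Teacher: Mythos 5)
Your proposal is correct and follows essentially the same route as the paper: the discussion preceding the proposition invokes exactly the criterion of \cite[Section E, Diagram]{AG09}, disposes of $\H^1(\RR,\bH')$ via Lemma \ref{lemma trivial cohomology}, classifies the descendants as products of diagonal, Galois, and PTB pairs by adapting \cite[Theorem 7.7.1]{AG09}, and reduces the remaining regularity check to Lemma \ref{lemma PTB special}. Nothing is missing.
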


\subsection{Self-duality and multiplicity one}

As in the non-archimedean case, we will say that the PTB pair $(\G,\H)$ is of Gelfand type if the space $\Hom_{\H}(\pi,\C)$ 
is of dimension at most one whenever $\pi\in \Irr^{\SF}(\G)$. 
In order to conclude that $(\G,\H)$ is in fact of Gelfand type, according to \cite[Section E, Diagram]{AG09}, it suffices to check that 
$\bG$ posseses an $\Ad(\bG)$-admissible anti-automorphism which stabilizes $\bH$. This anti-automorphism exists in 
both cases ($\G=\GL_{2n}(\RR)$ and $\G=\GL_{n}(\Ha)$) and it is the involution $\tau$ defined in \cite[p. 275]{Guo97}. We can in fact say more: by 
\cite[Theorem 8.2.1]{AG09}, if $\pi\in \Irr^{\SF}(\G)$, 
then $\pi^\vee=\pi\circ \tau\in \Irr^{\SF}(\G)$. Now following the last paragraph of 
\cite[p.67]{JR96} adapted to the archimedean setting, we obtain:

\begin{theorem}\label{theorem PTB over R is G}
Let $(\G,\H)$ be an archimedean pair of PTB type, then it is of Gelfand type and for all $\pi\in \Irr_{\H\dist}^{\SF}(\G)$, one has $\pi=\pi^\vee$.
\end{theorem}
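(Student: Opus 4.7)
The Gelfand type assertion has essentially been established in the discussion immediately preceding the theorem: combining Proposition~\ref{proposition PTB over R is GK} (archimedean PTB pairs are of GK type) with the existence of the $\Ad(\bG)$-admissible anti-automorphism $\tau$ of $\bG$ stabilizing $\bH$---the involution defined in \cite[p.~275]{Guo97}---the diagram in \cite[Section E]{AG09} yields $\dim_\CC\Hom_\H(\pi,\CC)\le 1$ for every $\pi\in\Irr^{\SF}(\G)$. So the only new input needed is the self-duality statement.

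For the self-duality, my plan is to follow the last paragraph of \cite[p.~67]{JR96}, adapted to the Fr\'echet setting. By \cite[Theorem~8.2.1]{AG09} one has $\pi^\vee\simeq \pi\circ \tau$, and since $\tau$ stabilizes $\H$, pre-composition by $\tau$ preserves $\H$-distinction: explicitly, if $S:\pi^\vee\to \pi\circ \tau$ is an isomorphism and $L\in \Hom_\H(\pi,\CC)$ is non-zero, then $L\circ S\in \Hom_\H(\pi^\vee,\CC)$ is non-zero. Hence $\pi^\vee$ is also $\H$-distinguished whenever $\pi$ is, and the GK property gives
\[
\dim_\CC\Hom_\H(\pi,\CC)\cdot \dim_\CC\Hom_\H(\pi^\vee,\CC)\le 1,
\]
forcing both spaces to be exactly one-dimensional. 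I would then follow Jacquet-Rallis: fix non-zero $L\in \Hom_\H(\pi,\CC)$ and $L'\in \Hom_\H(\pi^\vee,\CC)$, and build from $L\otimes L'$ a non-zero $\H$-bi-invariant Schwartz distribution on $\G$ by integrating against matrix coefficients of $\pi\otimes\pi^\vee$. By Proposition~\ref{proposition PTB over R is GK} this distribution is $\sigma$-invariant for $\sigma=\theta\circ i$, and translating this $\sigma$-symmetry through the identification $\pi^\vee\simeq \pi\circ \tau$ produces a non-trivial $\G$-intertwiner $\pi\to \pi^\vee$, which is an isomorphism by Schur's lemma.

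The main obstacle is verifying that the Jacquet-Rallis argument, stated in \cite{JR96} for $p$-adic groups, transports faithfully to the Fr\'echet setting of Section~\ref{section archimedean case}. Concretely one has to check that matrix coefficients of representations in $\SF(\G)$ pair with Schwartz functions to yield honest elements of $\SD(\G)$ (using the moderate growth condition), that Proposition~\ref{proposition Knapp} and Corollary~\ref{corollary Knapp} supply enough $\mathcal{C}^\infty_{c,\K\mbox{-}\mathrm{fin}}(\G)$-vectors to make the resulting distribution non-zero, and that the Gelfand-Kazhdan mechanism used by Jacquet-Rallis survives in the form axiomatized in \cite[Section E]{AG09} for real symmetric pairs, so that $\sigma$-invariance of the bi-invariant distribution genuinely yields the sought intertwiner.
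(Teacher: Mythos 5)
Your plan follows the same route as the paper: Gelfand type from Proposition~\ref{proposition PTB over R is GK} together with the $\Ad(\bG)$-admissible anti-automorphism $\tau$ via the diagram in \cite[Section~E]{AG09}, and self-duality by adapting the last paragraph of \cite[p.~67]{JR96}. But the critical sentence of your self-duality argument---``translating this $\sigma$-symmetry through the identification $\pi^\vee\simeq\pi\circ\tau$ produces a non-trivial $\G$-intertwiner $\pi\to\pi^\vee$''---is exactly where all the work lies, and you leave it unexecuted. In the $p$-adic case this passage is essentially formal; in the Fr\'echet setting it is not, and the paper's proof spends almost all of its length on it.

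Concretely, the paper takes the distribution to be $\D_{\ell,\ell'}(\phi)=\ell'\bigl(I^{-1}(\pi^*(\phi)\ell)\bigr)$ (relying on \cite[Lemma~2.1.6]{AGS08} for $\pi^*(\phi)\ell$ to land in the smooth dual), evaluates it in two ways on test functions of the form $\phi=\phi_1^\vee*\phi_2$ using $\sigma$-invariance, and from this two-sided convolution calculation extracts a \emph{linear} map $\J$ defined only on the dense subspace $\pi^\vee(\Sc(\G))\lambda$ and intertwining $\phi\mapsto\pi^\vee(\phi)$ with $\phi\mapsto\widetilde{\pi}^\vee(\phi^\theta)$. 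Promoting $\J$ to an isomorphism in $\SF(\G)$ then requires two further ingredients that your sketch does not supply: (a) Proposition~\ref{proposition Knapp} and Corollary~\ref{corollary Knapp} are used to choose the auxiliary test functions so that $\lambda,\lambda'$ lie in the underlying Harish--Chandra module $W_0$ and to show $\J$ restricts to a non-zero $(\mathfrak g,\K)$-module map on $W_0$; and (b) the automatic continuity theorem \cite[Theorem~11.6.7]{W92} is then invoked to upgrade this algebraic intertwiner of $(\mathfrak g,\K)$-modules to a continuous isomorphism of the Fr\'echet representations. Your appeal to ``Schur's lemma'' is applied at the level of Harish--Chandra modules only; without the automatic continuity step you do not get an isomorphism in $\SF(\G)$. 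You list Proposition/Corollary Knapp among the ``obstacles'' but never mention automatic continuity, which is the decisive tool. Finally, a minor point: in the archimedean setting the paper obtains the Gelfand property directly from the diagram of \cite[Section~E]{AG09} and the existence of $\tau$, rather than by first proving an inequality $\dim\Hom_\H(\pi,\CC)\cdot\dim\Hom_\H(\pi^\vee,\CC)\le1$; there is no archimedean analogue of Proposition~\ref{proposition GK type implies GP2} stated in the paper, so that intermediate step in your plan should be replaced by a direct citation of the AG09 diagram.
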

\begin{proof}
We already noticed that $(\G,\H)$ is of Gelfand type because of the existence of an $\Ad(\bG)$-admissible anti-automorphism which stabilizes $\bH$. We now give more details on the adaptation of the argument of \cite[p.67]{JR96} to the archimedean setting. 
We denote by $\K$ a maximal compact subgroup of $\G$. We take 
$(\pi,V)$ a representative of an element in $\Irr_{\H\dist}^{\SF}(\G)$. Set $(\widetilde{\pi}, V)=(\pi\circ \tau, V)$ so that $\widetilde{\pi}$ is thus isomorphic to $\pi^\vee$ (in this proof we need to consider representations rather than isomorphism classes of representations), and take $\ell'\in \Hom_{\H}(\widetilde{\pi},\CC)$ and 
$\ell\in \Hom_{\H}(\pi,\CC)$ both non-zero. Then according to \cite[Lemma 2.1.6]{AGS08}, for $\phi\in \Sc(\G)$, the linear form 
\[\pi^*(\phi)\ell: v\mapsto \int_{\G}\phi(g)\ell(\pi(g^{-1})v)dg\] belongs to $W=V^\vee$ and the map 
$\phi\mapsto \pi^*(\phi)\ell$ is continuous map from $\Sc(\G)$ to $W$. We choose a non-zero isomorphism $\I$ from 
$\widetilde{\pi}$ to $\pi^\vee$, so the map \[\D_{\ell,\ell'}:\phi\mapsto \ell'(I^{-1}(\pi^*(\phi)\ell))\] is an $\H$-bi-invariant distribution 
in $\SD(\G)$, hence it is invariant under $\sigma$. For $\phi\in \Sc(\G)$, we write $\phi^\vee=\phi\circ i$, it is also in 
$\Sc(\G)$. Take $\phi$ of the form $\phi_1^\vee*\phi_2$ for $\phi_i\in \Sc(\G)$, then 
\[\D_{\ell,\ell'}(\phi)=\ell'(I^{-1}(\pi^*(\phi)\ell))=\ell'(I^{-1}(\pi^\vee(\phi_1^\vee)\pi^*(\phi_2)\ell))=\ell'(\widetilde{\pi}(\phi_1^\vee)I^{-1}(\pi^*(\phi_2)\ell)) \]
\[= \widetilde{\pi}^*(\phi_1)\ell'(I^{-1}(\pi^*(\phi_2)\ell)),\] but on the other hand 
\[\D_{\ell,\ell'}(\phi)=\D_{\ell,\ell'}(\phi^\sigma)=\D_{\ell,\ell'}(\phi_2^\sigma*\phi_1^\theta)=\widetilde{\pi}^*(\phi_2^\theta)\ell'(I^{-1}(\pi^*(\phi_1^\theta)\ell)).\] We now take $\phi_2$ of the form $\phi_3*\phi_4$ with $\phi_3$ and $\phi_4$ in 
$\Sc(\G)$, and we choose $\phi_4$ such that both $\lambda=\pi^*(\phi_4)\ell\neq 0$ in $V^\vee$ and 
 $\lambda'=\widetilde{\pi}^*(\phi_4^\theta)\ell'\neq 0$ in $V^\vee$ (this is always possible taking $\phi_4$ approximating the identity). The relation above implies that 
 \[\widetilde{\pi}^*(\phi_1)\ell'(I^{-1}(\pi^\vee(\phi_3)\lambda))= \widetilde{\pi}^\vee(\phi_3^\theta)\lambda'(I^{-1}(\pi^*(\phi_1^\theta)\ell)),\] so that if $\pi^\vee(\phi_3)\lambda=0$, then $\widetilde{\pi}^\vee(\phi_3^\theta)\lambda'=0$ as well. In particular there exists a non-zero linear map $\J$ from $\pi^\vee(\Sc(\G))\lambda$ to $\widetilde{\pi}^\vee(\Sc(\G))\lambda'=\pi^\vee(\Sc(\G))\lambda'$ such that \[\J(\pi^\vee(\phi)\lambda)=\widetilde{\pi}^\vee(\phi^\theta)\lambda'\] for all $\phi\in \Sc(\G)$. We now set $W=V^\vee$ and denote by $W_0$ its underlying irreducible $(\mathfrak{g},\K)$-module. Thanks to Corollary  \ref{corollary Knapp} applied to $\pi^\vee$ and $\widetilde{\pi}^\vee\circ \theta$, one can choose $\phi\in {\mathcal{C}_{c,\K\mbox{-}\mathrm{fin}}^\infty}(\G)$ such that $\pi^\vee(\phi))\lambda$ and $\widetilde{\pi}^\vee(\phi^\theta)\lambda'$ are non-zero so we could suppose from the beginning that 
 $\lambda$ and $\lambda'$ were both in $W_0$. Moreover as 
 \[\pi^\vee({\mathcal{C}_{c,\K\mbox{-}\mathrm{fin}}^\infty}(\G))\lambda=\pi^\vee({\mathcal{C}_{c,\K\mbox{-}\mathrm{fin}}^\infty}(\G))\lambda'=W_0\] thanks to Proposition \ref{proposition Knapp}, the map $\J$ sends $W_0$ to 
 $W_0$, and its restriction to $W_0$ can be checked to be a non-zero intertwining operator of $(\mathfrak{g},\K)$-modules from 
 $(\pi^\vee,W_0)$ to $(\widetilde{\pi}^\vee\circ \theta,W_0)$. For example if $X\in \mathfrak{g}$, one has 
\[\J(\pi(X)\pi(\phi)\lambda)= \J(\pi(L_X\phi)\lambda)=\widetilde{\pi}^\vee((L_X\phi')^\theta)\lambda'=
\widetilde{\pi}^\vee(X^\theta)\widetilde{\pi}^\vee(\phi^\theta)\lambda'=\widetilde{\pi}^\vee(X^\theta)\J(\pi(\phi)\lambda).\] To conclude we appeal to the automatic continuity 
 theorem  (\cite[Theorem 11.6.7, second statement]{W92}) and deduce that $\pi^\vee$ is isomorphic to 
 $\widetilde{\pi}^\vee\circ \theta\simeq \widetilde{\pi}^\vee\simeq \pi$. 
\end{proof}

\begin{remark}
The proof above also applies to archimedean pairs of JR type (i.e. pairs of the form $(\GL_{2n}(\F),\GL_n(\F)\times \GL_n(\F))$ with 
$\F=\RR$ or $\CC$) and shows that irreducible distinguished representations of $\GL_{2n}(\F)$ are self-dual. These pairs are proved to be of Gelfand type in \cite{AG09} but it does not seem that self-duality of 
irreducible distinguished representations is addressed in loc.cit. 
\end{remark}

\bibliographystyle{plain}
\bibliography{Mult1}
\end{document}